%

\documentclass[reqno,b5paper]{amsart}
\usepackage{multicol}
\usepackage{amsmath}
\usepackage{amssymb}
\usepackage{amsthm}
\usepackage{threeparttable}
\usepackage{}
\usepackage{enumerate}
\usepackage[mathscr]{eucal}
\setlength{\textwidth}{121.9mm} \setlength{\textheight}{176.2mm}
\swapnumbers
\theoremstyle{plain}
\newtheorem{thm}{Theorem}[section]

\newtheorem{lem}[thm]{Lemma}
\newtheorem{Example}{Example}[section]
\newtheorem{note}{Note}[section]

\theoremstyle{definition}
\newtheorem{defn}{Definition}[section]
\newtheorem{rem}{Remark}[section]

\begin{document}

\setcounter {page}{1}
\title{rough $I$-statistical convergence of double sequences}

\author{ Prasanta Malik$^{1*}$ and Argha Ghosh$^{2}$\ }
\thanks{1 Department of Mathematics, University of Burdwan, Golapbag, Burdwan-713104, West Bengal, India. Email: pmjupm@yahoo.co.in}
\thanks{* Corresponding Author}
\thanks{2 Department of Mathematics, University of Burdwan, Golapbag, Burdwan-713104, West Bengal, India. Email: buagbu@yahoo.co.in}
\newcommand{\acr}{\newline\indent}
\maketitle

\begin{center}
\textbf{Abstract
}\end{center}

The notion of $I$-statistical convergence of a double sequence was first introduced by Belen et. al.\cite{Be}. 
In this paper we introduce and study the notion of rough $I$-statistical convergence
of double sequences in normed linear spaces. We also introduce the notion of rough $I$-statistical limit set of a double sequence
and discuss about some topological properties of this set.\\\\
\maketitle
{ \textbf{Key words and phrases :} Double sequence, $I$-statistical convergence, rough $I$-statistical convergence, 
rough $I$-statistical limit set, $I$- statistical boundedness.}\\\\ 
\textbf {AMS subject classification (2010) : 40A05, 40B99 } .  \\

\section{\textbf{Introduction:}} 
 The notion of convergence of real double sequences was first introduced by Pringsheim \cite{Pr}. A double 
sequence $x = \{x_{jk}\}_{ j, k \in \mathbb {N}} $ of real numbers is said to converge to a real 
number $l$, if for any $\varepsilon > 0 $, there exists $ m \in \mathbb{N} $ such that for all $ j, k \geq m $
\begin{center}
$| x_{jk} - l | < \varepsilon $ . 
\end{center} 
In this case we write $ \lim\limits_{\stackrel{\stackrel{j\rightarrow
\infty}{k\rightarrow \infty}} ~} x_{jk} = l  $. This notion of convergence 
of real double sequences has been extended to statistical convergence by Mursaleen et. al. \cite{Mu} 
( also by Moricz \cite{Mo} who introduced it for multiple sequences) using double natural density of $\mathbb{N} \times \mathbb{N}$.
A subset $K \subset \mathbb{N} \times \mathbb{N} $ is said to have natural density $d(K)$ if 
\begin{center}
$d(K) = \lim\limits_{\stackrel{\stackrel{m\rightarrow
\infty}{n\rightarrow \infty}} ~} \frac{| K(m,n)|}{m.n} $,
\end{center}
where $K(m,n) = \{(j,k) \in \mathbb{N} \times \mathbb{N}: j \leq m, k \leq n;(j,k)\in K \} $ and $\left|K(m,n)\right|$ denotes number of elements of the set $K(m,n)$.  

A double sequence $x = \{x_{jk}\}_{ j, k \in \mathbb{N}}$ of real numbers is said to 
be statistically convergent to $\xi \in \mathbb{R}$, if
for any $\epsilon > 0$, we have $d(A(\epsilon)) = 0$, where
$A(\epsilon) = \{ (j,k) \in \mathbb{N}\times \mathbb{N} : \parallel x_{jk} - \xi \parallel \geq \varepsilon \}$.

More investigation and applications of statistical convergence of double sequences can be found in \cite{Da1,Tr2} and many others.

The notion of statistical convergence of double sequences has been further generalized to $I$-convergence of double sequences by Das et. al. \cite{Da2} using ideals in $\mathbb{N}\times\mathbb N$. For more details one can see \cite{Da1, Da3, Tr1} etc.

 Recently Belen et. al. \cite{Be} introduced the notion of ideal statistical convergence of double sequences, which is a new generalization of the notions of statistical convergence and usual convergence. More investigation and applications on this notion can be found in \cite{Be, Ya}.

The concept of rough convergence of double sequence was first introduced by Malik et. al.\cite{Ma1}. If $x=\left\{x_{jk}\right\}_{j,k\in\mathbb N}$ be a double sequence in some normed linear space $(X,\left\|.\right\|)$ and $r$ be a non negative real number, then $x$ is said to be $r$-convergent to $\xi\in X$ if for any $\varepsilon>0$, there exists $ m \in \mathbb{N} $ such that for all $ j, k \geq m $
\begin{center}
$| x_{jk} - \xi | < r+\epsilon $. 
\end{center} 

Further this notion of rough convergence of double sequence has been extended to rough statistical convergence of double sequence by Malik et. al.\cite{Ma2} using double natural density of the subsets of $\mathbb{N}\times\mathbb N$ in a similar way as the notion of convergence of double sequence in Pringsheim sense was generalized to statistical convergence of double sequences. Further the notion of rough statistical convergence of double sequences was generalized to rough $I$-convergence of double sequences by Dunder et. al. \cite{Du}. So it is quite natural to think , if the new notion of $I$-statistical convergence of double sequences can be introduced in the theory of rough convergence.

 In this paper we introduced and study the notion of rough $I$-statistical convergence of double sequences in a normed linear space $(X,\left\|.\right\|)$ which naturally extends both the notions of rough convergence as well as rough statistical convergence of double sequences in a new way. We also define the set of all rough $I$-statistical limits of a double sequence and investigate some topological properties of this set.

\section{\textbf{Basic Definitions and Notations}}
\begin{defn}\cite{Ma2}
Let $x = \{x_{jk}\}_{j,k \in \mathbb{N}}$ be a double sequence in a
normed linear space $(X, \parallel.\parallel)$ and $r$ be a non negative real number.
$x$ is said to be $r$- statistically convergent to $\xi$, denoted by $x \overset{r-st}\longrightarrow \xi$,
if for any $\varepsilon > 0$ we have $d(A(\varepsilon)) = 0$, where
$A(\varepsilon) = \{ (j,k) \in \mathbb{N}\times \mathbb{N} :~~ \parallel x_{jk} - \xi \parallel \geq r + \varepsilon \}$.
In this case $\xi$ is called the $r$-statistical limit of $x$.
\end{defn}
\begin{defn}
A class $I$ of subsets
of a nonempty set $X$ is said to be an ideal in $X$ provided

(i) $\phi\in I$.

(ii) $ A,B\in I$ $~$implies $A\bigcup B\in I$.

(iii) $ A\in I,B\subset A $$~$ implies $~~$   $B\in I$.

$I$ is called a nontrivial ideal if $X\notin I$.

\end{defn}
\begin{defn}
A non empty class $F$ of
subsets of a nonempty set $X$ is said to be a filter in $X$ provided

(i) $\phi\notin F$.

(ii) $A,B\in F$ $~$ implies $~~$ $A\bigcap B\in F$.

(iii) $A\in F,A\subset B$ $~$ implies $~~$ $B\in F$.

If $I$ is a nontrivial ideal in $X$, $X\neq\phi$, then the class
\begin{center}
$F(I)=\{ M \subset X  : M = X \setminus A$ for some $A \in I \}$
\end{center}
is a filter on $X$, called the filter associated with $I$.
\end{defn}
\begin{defn}
A nontrivial ideal $I$ in $X$ is called admissible if $\{x\} \in I$ for each $x \in X $.
\end{defn}
\begin{defn}
A nontrivial ideal $I$ on $\mathbb{N} \times \mathbb{N} $ is called strongly admissible if $\left\{i\right\}\times \mathbb{N}$ and $\mathbb{N}\times\left\{i\right\}$ belong to $I$ for each $i\in \mathbb{N}$.
\end{defn}
Clearly every strongly admissible ideal is admissible.
Throughout the paper we take $I$ as a strongly admissible ideal in $\mathbb{N} \times \mathbb{N} $.

\begin{defn}\cite{Ma3}
Let $ x = \{x_{jk}\}_{j,k \in \mathbb{N}} $ be a double sequence in a normed linear space $ (X, \parallel . \parallel) $ and $ r $ be a non negative real number. Then $ x $ is said to be rough $I$-convergent or $ r-I$-convergent to $ \xi $, 
denoted by $ x \overset{r-I}\longrightarrow \xi $, if for any $ \varepsilon > 0 $ we have $ \{(j,k) \in \mathbb{N} \times \mathbb{N} :\parallel x_{jk} - \xi \parallel \geq r + \varepsilon \} \in I $. In this case $ \xi $ is called rough $ I $-limit of $ x $ and $x$ is called rough $I$-convergent to $\xi$ with $r$ as roughness degree.
\end{defn}
Now we give the definition of $I$-asymptotic density of a subset of $\mathbb{N}\times\mathbb N$.
\begin{defn}
A subset $K \subset \mathbb{N} \times \mathbb{N} $ is said to have $I$-asymptotic density $d_I(K)$ if 
\begin{center}
$d_I(K) = I-\lim\limits_{\stackrel{\stackrel{m\rightarrow
\infty}{n\rightarrow \infty}} ~} \frac{| K(m,n)|}{m.n} $,
\end{center}
where $K(m,n) = \{(j,k) \in \mathbb{N} \times \mathbb{N}: j \leq m, k \leq n;(j,k)\in K \} $ and $\left|K(m,n)\right|$ denotes number of elements of the set $K(m,n)$. 
\end{defn}

\begin{defn}$[1]$
 A double sequence $x = \{x_{jk}\}_{j,k\in \mathbb{N}}$ of real numbers is  $I$-statistically convergent to $L$, and we write $x\stackrel{I-st}{\rightarrow}L$, provided that for any $\epsilon> 0$ and $\delta> 0$
\begin{center}
$\left\{\left(m,n\right)\in\mathbb{N}\times\mathbb{N} :\frac{1}{mn}\left|\left\{\left(j,k\right):\left|x_{jk}-L\right|\geq\epsilon,j\leq m,k\leq n\right\}\right|\geq\delta\right\}\in I$.
\end{center}
\end{defn}

\begin{defn}
Let $x=\{x_{jk}\}_{j,k \in \mathbb{N}}$ be a double sequence in  a normed
linear space $\left(X,\left\|.\right\|\right)$ and $r$ be a non
negative real number. Then $x$ is said to be rough
$I$-statistically convergent to $ \xi $ or
$r\mbox{-}I$-statistically convergent to $\xi$ if for any
$\varepsilon > 0$ and $ \delta > 0$
\begin{center}
$\{ (m,n) \in {\mathbb{N}\times\mathbb N}: \frac{1}{mn}|\{(j,k):j\leq m, k \leq n; \parallel x_{jk} - \xi \parallel \geq r+ \varepsilon\}|
\geq \delta \} \in I $.
\end{center}
\end{defn}
In this case $\xi$ is called the rough $I$-statistical limit of
$x=\{x_{jk}\}_{j,k \in \mathbb{N}}$ and we denote it by $ x
\overset{r\mbox{-}I \mbox{-}st}\longrightarrow \xi $.
\begin{rem}
Note that if $I$ is the ideal $I_0 = \{ A \subset \mathbb{N} \times \mathbb{N}: \exists 
m(A) \in \mathbb{N} ~such ~ that ~  i,j \geq m(A) \Rightarrow (i,j) \notin A \}$, then rough $I$-statistical 
convergence coincide with rough statistical convergence.
\end{rem}

Here $r$ in the above definition is called the roughness degree of
the rough $I$-statistical convergence. If $r=0$, we obtain the
notion of $I \mbox{-}$statistical convergence. But our main
interest is when $ r > 0 $. It may happen that a double sequence $x=\{x_{jk}\}_{j,k \in \mathbb{N}}$ is not $ I $-statistically convergent
in the usual sense, but there exists a double sequence $y=\{y_{jk}\}_{j,k \in \mathbb{N}}$, which is $ I $-statistically convergent and
satisfying the condition $\| x_{jk} -y_{jk} \|\leq r $ for all  $j,k$. Then $ x $ is
rough $I$-statistically convergent to the same limit.

From the above definition it is clear that the rough
$I$-statistical limit of a double sequence is not unique. So we consider
the set of rough $I$-statistical limits of a double sequence $x$ and we
use the notation $I\mbox{-}st\mbox{-}\mbox{LIM}_x^r$ to denote the
set of all rough $I$-statistical limits of a double sequence $x$. We say
that a double sequence $x$ is rough $I$-statistically convergent if
$I\mbox{-}st\mbox{-}\mbox{LIM}_x^r\neq \phi$.

Throughout the paper $X$ denotes a normed linear space $(X, \|. \|)$ and $x$ denotes the double sequence $\{x_{jk}\}_{j,k \in \mathbb{N}}$ in $X$ .

 We now provide an example to show that there exists a double sequence which is neither
 rough statistically convergent nor  $I$-statistically convergent but is rough $I$-statistically convergent.

\begin{Example}
Let $ I $ be a nontrivial strongly admissible ideal in $\mathbb{N}\times\mathbb N$ which contains at least one infinite subset of $\mathbb{N}\times \mathbb N$. Choose an
infinite set $ A \subset {\mathbb{N}\times\mathbb N} $, whose $I$-asymptotic density is zero but
double natural density does not exists. We define a double sequence $x=\{x_{jk}\}_{j,k \in \mathbb{N}}$ in the following way

\[ x_{jk} = \left\{
  \begin{array}{l l}
    (-1)^{j+k}, & \quad \text{if $ (j,k)\notin A$ }\\
    jk, & \quad \text{if $(j,k)\in A$}.
  \end{array} \right.\]\\

Then $x$ neither rough statistically convergent nor
$I$-statistically convergent but
\[ I\mbox{-}st\mbox{-}LIM^r_x = \left\{
  \begin{array}{l l}
    \emptyset, & \quad \text{if $ r < 1 $ }\\
    \left[1-r, r-1\right], & \quad \text{otherwise.}
  \end{array} \right.\]\\

\end{Example}


\section{\textbf{Main Results}}

In this section we discuss some basic properties of rough $I$-statistical convergence of double sequences.


\begin{thm}
Let $x = \{x_{jk} \}_{j, k \in \mathbb{N}} $ be a double sequence in $X$ and
$r \geq 0 $. Then diam ($I\mbox{-}st\mbox{-}\mbox{LIM}_x^r) \leq 2r $.
In particular if $x$ is $I$-statistically convergent to $\xi$,
then $I\mbox{-}st\mbox{-}\mbox{LIM}_x^r
={\overline{{B_r}(\xi)}}(=\left\{y\in
X:\left\|y-\xi\right\|\leq r\right\})$ and so diam
($I\mbox{-}st\mbox{-}\mbox{LIM}_x^r) = 2r $.
\end{thm}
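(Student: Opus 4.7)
The plan is to attack the diameter bound by contradiction and then, under the additional hypothesis of $I$-statistical convergence to $\xi$, to establish the equality $I\mbox{-}st\mbox{-}\mbox{LIM}_x^r = \overline{B_r(\xi)}$ by a pair of inclusions. Throughout, the engine is the same: a pointwise triangle inequality at the level of indices $(j,k)$ is promoted to an inclusion of sets of pairs $(m,n)$, which then interacts with $I$ and its dual filter $F(I)$.

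For the diameter bound I would assume, toward contradiction, that $y_1,y_2 \in I\mbox{-}st\mbox{-}\mbox{LIM}_x^r$ satisfy $\|y_1-y_2\| > 2r$, and choose $\varepsilon > 0$ so small that $\|y_1-y_2\| > 2(r+\varepsilon)$. The triangle inequality then forces, for every $(j,k)$, at least one of $\|x_{jk}-y_i\| \geq r+\varepsilon$ ($i=1,2$) to hold; writing $A_i(m,n) = \{(j,k): j\leq m,\, k\leq n,\, \|x_{jk}-y_i\|\geq r+\varepsilon\}$, this gives $|A_1(m,n)|+|A_2(m,n)| \geq mn$, so at least one of the ratios $|A_i(m,n)|/(mn)$ is $\geq 1/2$. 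Applying the definition of rough $I$-statistical convergence to each $y_i$ with $\delta = 1/2$ and taking the union of the resulting two sets in $I$ produces $\mathbb{N}\times\mathbb{N} \in I$, contradicting the nontriviality of $I$.

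For the equality, the inclusion $\overline{B_r(\xi)} \subseteq I\mbox{-}st\mbox{-}\mbox{LIM}_x^r$ is the easy direction: given $\|y-\xi\| \leq r$ and any $\varepsilon > 0$, the triangle inequality yields the pointwise inclusion $\{(j,k): \|x_{jk}-y\|\geq r+\varepsilon\} \subseteq \{(j,k): \|x_{jk}-\xi\|\geq \varepsilon\}$; restricting to $j\leq m$, $k\leq n$, dividing by $mn$, and thresholding by any $\delta > 0$ preserves this nestedness at the $(m,n)$-level, and the outer set lies in $I$ by $I$-statistical convergence of $x$ to $\xi$. For the reverse inclusion I would argue by contradiction: assume $\|y-\xi\| > r$, set $\varepsilon = (\|y-\xi\|-r)/2$, and invoke the reverse triangle inequality to deduce $\{(j,k): \|x_{jk}-\xi\| < \varepsilon\} \subseteq \{(j,k): \|x_{jk}-y\|\geq r+\varepsilon\}$. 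At the $(m,n)$-level the set $\{(m,n): \frac{1}{mn}|\{(j,k): j\leq m, k\leq n, \|x_{jk}-\xi\|\geq \varepsilon\}| < 1/2\}$ lies in $F(I)$, and on this set the corresponding density for $y$ exceeds $1/2$; this forces a set in $F(I)$ to be contained in a set in $I$, which is impossible for a nontrivial ideal. Combining this with the elementary fact $\mathrm{diam}\,\overline{B_r(\xi)} = 2r$ in a normed space completes the claim.

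The main subtlety, common to both halves, is the correct handling of the $\delta$-threshold in the $I$-statistical definition: the design choice $\delta = 1/2$ is what guarantees that two complementary thresholded $(m,n)$-sets cover all of $\mathbb{N}\times\mathbb{N}$, so that the ideal-union axiom can deliver the contradiction in the diameter bound, and symmetrically that the ``$\xi$-close'' and ``$\xi$-far'' blocks cannot simultaneously be $F(I)$-large and $I$-small. Once this choice is fixed, both parts of the theorem reduce to chaining one index-level inclusion coming from the triangle inequality with one $(m,n)$-level passage between $I$ and $F(I)$.
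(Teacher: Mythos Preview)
Your proposal is correct and follows essentially the same approach as the paper: both argue by contradiction via the triangle inequality and the threshold $\delta=\tfrac{1}{2}$, and both establish the equality $I\mbox{-}st\mbox{-}\mbox{LIM}_x^r=\overline{B_r(\xi)}$ by the same pair of inclusions. The only cosmetic difference is that the paper, after thresholding at $\tfrac{1}{2}$, descends to a specific pair $(j_0,k_0)\notin A\cup B$ and derives the contradiction $\|y-z\|<\|y-z\|$ pointwise, whereas you stay at the $(m,n)$-level and obtain $\mathbb{N}\times\mathbb{N}\in I$ directly from the union axiom; these are interchangeable packagings of the same idea.
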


\begin{proof}
If possible let, diam ($I\mbox{-}st\mbox{-}\mbox{LIM}_x^r) > 2r $. Then there exist $ y,z \in I\mbox{-}st\mbox{-}\mbox{LIM}_x^r $ such that $\| y - z \| > 2r $. Now choose $ \varepsilon > 0$
so that $ \varepsilon < \frac{\| y - z \|}{2} - r $.
Let
$ A = \{(j,k) \in \mathbb{N\times N} : \|x_{jk} - y \| \geq r +\varepsilon  \}$
and
$ B = \{(j,k) \in \mathbb{N\times N} : \|x_{jk} - z \| \geq r + \varepsilon \}$.
Then
\begin{center}
$ \frac{1}{mn} |\{ (j,k):j\leq m;k \leq n,(j,k) \in A \cup B \}| 
\leq \frac{1}{mn}|\{(j,k):j\leq m; k \leq n,(j,k) \in A\}| + \frac{1}{mn}|\{(j,k):j\leq m; k \leq n ,(j,k)\in B \}|$,
\end{center}
and so by the property of $I$-convergence \\
$ I \mbox{-}\lim\limits_{\stackrel{\stackrel{m\rightarrow
\infty}{n\rightarrow \infty}} ~} \frac{1}{mn} |\{ (j,k):j\leq m;k \leq n,(j,k) \in A \cup B \}| \leq 
I \mbox{-}\lim\limits_{\stackrel{\stackrel{m\rightarrow
\infty}{n\rightarrow \infty}} ~}\frac{1}{mn}|\{(j,k):j\leq m; k \leq n,(j,k) \in A\}| + \frac{1}{mn}|\{(j,k):j\leq m; k \leq n ,(j,k)\in B \}|= 0 $.
Thus $\{(m,n )\in \mathbb{N\times N} : \frac{1}{mn} |\{(j,k):j\leq m; k \leq n, (j,k)\in A \cup B \}| \geq \delta\} \in I $ for all $\delta>0$.
Let $ K = \{(m, n) \in \mathbb{N\times N} : \frac{1}{mn} |\{(j,k):j\leq m;k \leq n,(j,k )\in A \cup B \}| \geq \frac{1}{2}\} $. Clearly $K\in I$,
so choose $ (m_0, n_0) \in \mathbb{N\times N} \setminus K $. Then $ \frac{1}{m_0n_0}|\{(j,k):j\leq m_0;k \leq n_0,(j, k) \in A \cup B \}| < \frac{1}{2}$. 
So $\frac{1}{m_0 n_0} | \{(j,k):j\leq m_0;k \leq n_0,(j, k) \notin A \cup B \}| \geq 1- \frac{1}{2} = \frac{1}{2} $
i.e., $ \{(j, k) :(j, k) \notin A \cup B \}$ is a nonempty set. Take $(j_0, k_0 )\in \mathbb{N\times N}$ such that $(j_0, k_0) \notin A \cup B$. Then $(j_0, k_0) \in A^c \cap B^c$ and hence $ \|x_{j_0 k_0} - y \| < r + \varepsilon $ and $ \| x_{j_0 k_0} - z \| < r + \varepsilon $. So
$\|y - z \| \leq \|x_{j_0 k_0} -y \| + \|x_{j_0k_0} - z \| \leq 2(r + \varepsilon) < \| y - z \|$,
which is absurd. Therefore diam ($I\mbox{-}st\mbox{-}\mbox{LIM}_x^r) \leq 2r $.\\

If $ I\mbox{-}st\mbox{-}\lim x = \xi $, then we proceed as follows.
Let $ \varepsilon > 0 $ and $ \delta > 0 $ be given. Then
$ A = \{(m,n) \in \mathbb{N\times N}: \frac{1}{mn}|\{ (j,k):j\leq m;k \leq n , \|x_{jk} - \xi \|\geq \varepsilon \}|\geq \delta \} \in I $.
Then for $ (m,n) \notin A $ we have 
$\frac{1}{mn}|\{(j,k):j\leq m;k \leq n, \|x_{jk} - \xi \| \geq \varepsilon \} < \delta $,
i.e., 
\begin{eqnarray}
\frac{1}{mn} |\{(j,k):j\leq m;k \leq n, \|x_{jk} - \xi \|  < \varepsilon \} \geq 1 - \delta. 
\end{eqnarray}
Now for each $y\in {\overline{{B_r}(\xi)}}(=\left\{y\in
X:\left\|y-\xi\right\|\leq r\right\})$ we have 
\begin{eqnarray}
\|x_{jk} - y \| \leq \| x_{jk} - \xi \| + \|\xi - y \| \leq \|x_{jk} - \xi \| + r.
\end{eqnarray}
Let $B_{mn }= \left\{ (j,k):j\leq m;k \leq n ,\|x_{jk} - \xi \| < \varepsilon \right\}$. Then for $ (j,k) \in B_{mn}$ we have
$\|x_{jk} -y \| < r + \varepsilon$.
Hence $ B_{mn} \subset \{ (j,k):j\leq m;k \leq n, \|x_{jk} - y \|< r + \varepsilon\}$. This implies,
$ \frac{|B_{mn}|}{mn} \leq \frac{1}{mn}|\{(j,k):j\leq m;k \leq n , \|x_{jk} -y \| < r + \varepsilon\}|$
i.e., $\frac{1}{mn}|\{(j,k):j\leq m;k \leq n,\|x_{jk} - y\|< r + \varepsilon \}| \geq 1 -\delta $.
Thus for all $ (m,n) \notin A $, $ \frac{1}{mn} |(j,k):j\leq m;k \leq n, \|x_{jk} - y \|\geq r + \varepsilon \}|<1-(1 - \delta) $.
Hence we have 
$\{(m,n)\in\mathbb{N\times N}: \frac{1}{mn} |\{(j,k):j\leq m;k \leq n,\|x_{jk} - y \| \geq r +\varepsilon \}| \geq \delta\} \subset A $.
Since $ A \in I $, so 
$\{(m,n)\in\mathbb{N\times N}: \frac{1}{mn} |\{(j,k):j\leq m;k \leq n,\|x_{jk} - y \| \geq r + \varepsilon \}| \geq \delta\} \in I $.
This shows that $ y \in I\mbox{-}st\mbox{-}LIM_x^r$. Therefore $I\mbox{-}st\mbox{-}\mbox{LIM}_x^r
\supset{\overline{{B_r}(\xi)}} $.

 Conversely, let $ y \in I\mbox{-}st\mbox{-}LIM_x^r$. If possible , let $\left\|y-\xi\right\|>r$. Take $\varepsilon=\frac{\left\|y-\xi\right\|-r}{2}$. Let
$ K_1 = \{(j,k) \in \mathbb{N\times N} : \|x_{jk} - y \| \geq r +\varepsilon  \}$
and
$ K_2= \{(j,k) \in \mathbb{N\times N} : \|x_{jk} - \xi \| \geq \varepsilon \}$.
Then
\begin{center}
$ \frac{1}{mn} |\{ (j,k):j\leq m;k \leq n,(j,k) \in K_1 \cup K_2\}| 
\leq \frac{1}{mn}|\{(j,k):j\leq m; k \leq n,(j,k) \in K_1\}| + \frac{1}{mn}|\{(j,k):j\leq m; k \leq n ,(j,k)\in K_2 \}|$,
\end{center}
and so by the property of $I$-convergence \\
$ I \mbox{-}\lim\limits_{\stackrel{\stackrel{m\rightarrow
\infty}{n\rightarrow \infty}} ~} \frac{1}{mn} |\{ (j,k):j\leq m;k \leq n,(j,k) \in K_1 \cup K_2 \}| \leq 
I \mbox{-}\lim\limits_{\stackrel{\stackrel{m\rightarrow
\infty}{n\rightarrow \infty}} ~}\frac{1}{mn}|\{(j,k):j\leq m; k \leq n,(j,k) \in K_1\}| + \frac{1}{mn}|\{(j,k):j\leq m; k \leq n ,(j,k)\in K_2 \}|= 0 $. Let $ K = \{(m, n) \in \mathbb{N\times N} : \frac{1}{mn} |\{(j,k):j\leq m;k \leq n,(j,k )\in K_1 \cup K_2 \}| \geq \frac{1}{2}\} $. Clearly $K\in I$ and we choose $ (m_0, n_0) \in \mathbb{N\times N} \setminus K $. Then $ \frac{1}{m_0n_0}|\{(j,k):j\leq m_0;k \leq n_0,(j, k) \in K_1 \cup K_2\}| < \frac{1}{2}$. 
So $\frac{1}{m_0 n_0} | \{(j,k):j\leq m_0;k \leq n_0,(j, k) \notin K_1 \cup K_2 \}| \geq 1- \frac{1}{2} = \frac{1}{2} $
i.e., $ \{(j, k) :(j, k) \notin K_1 \cup K_2 \}$ is a nonempty set. We choose $(j_0, k_0 )\in \mathbb{N\times N}$ such that $(j_0, k_0) \notin K_1 \cup K_2$. Then $(j_0, k_0) \in {K_1}^c \cap {K_2}^c$ and hence $ \|x_{j_0 k_0} - y \| < r + \varepsilon $ and $ \| x_{j_0 k_0} - \xi \| <  \varepsilon $. So
\begin{center}
$\|y - \xi \| \leq \|x_{j_0 k_0} -y \| + \|x_{j_0k_0} - \xi \| \leq r +2 \varepsilon < \| y - z \|$,
\end{center}
which is absurd. Therefore $\left\|y-\xi\right\|\leq r$  and so $y\in {\overline{{B_r}(\xi)}}$. Consequently we have $I\mbox{-}st\mbox{-}\mbox{LIM}_x^r
={\overline{{B_r}(\xi)}}$ and this completes the proof.
\end{proof}

\begin{defn}
A double sequence $ x=\left\{x_{jk}\right\}_{j,k\in\mathbb N}$ is said to be $ I $- statistically bounded if there exists a positive number $ T $ such that for any $ \delta > 0 $ the set $ A = \{(m,n)\in\mathbb{N\times N}: \frac{1}{mn}|\{ (j,k):j\leq m;k \leq n,\|x_{jk}\| \geq T \}|  \geq \delta\}\in I $.
\end{defn}
The next result provides a relationship between boundedness and rough $I$-statistical convergence of double sequences.
\begin{thm}
If a double sequence $x = \{ x_{jk}\}_{ j,k \in \mathbb{N}}$ is
bounded then there exists $r \geq 0$ such that $I\mbox{-}st\mbox{-}LIM_x^r \neq
\emptyset$.
\end{thm}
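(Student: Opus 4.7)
The plan is to take the origin $0 \in X$ as a candidate rough $I$-statistical limit and to choose the roughness degree to match the bound. Interpreting ``bounded'' as $I$-statistical boundedness (in the sense of the definition given just above the theorem), the hypothesis supplies a positive number $T$ such that for every $\delta > 0$,
\begin{equation*}
A_\delta := \Bigl\{(m,n)\in\mathbb{N}\times\mathbb{N} : \tfrac{1}{mn}\bigl|\{(j,k): j\leq m,\, k\leq n,\, \|x_{jk}\| \geq T\}\bigr| \geq \delta\Bigr\} \in I.
\end{equation*}

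I would then set $r = T$ and verify that $0 \in I\mbox{-}st\mbox{-}\mbox{LIM}_x^r$, which forces the set to be nonempty. The key observation is the trivial pointwise inclusion
\begin{equation*}
\{(j,k): \|x_{jk} - 0\| \geq r + \varepsilon\} \;=\; \{(j,k): \|x_{jk}\| \geq T + \varepsilon\} \;\subseteq\; \{(j,k): \|x_{jk}\| \geq T\},
\end{equation*}
valid for every $\varepsilon > 0$. Intersecting with $\{j\leq m, k\leq n\}$ and taking densities preserves this inclusion, so for any $\delta > 0$,
\begin{equation*}
\Bigl\{(m,n): \tfrac{1}{mn}\bigl|\{(j,k): j\leq m,\, k\leq n,\, \|x_{jk}\| \geq r+\varepsilon\}\bigr| \geq \delta\Bigr\} \;\subseteq\; A_\delta.
\end{equation*}
Since $A_\delta \in I$ and ideals are closed under taking subsets, the left-hand set also lies in $I$, which is exactly the condition that $0$ is a rough $I$-statistical limit of $x$ with roughness $r$.

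More generally, the same argument shows $\xi \in I\mbox{-}st\mbox{-}\mbox{LIM}_x^{T+\|\xi\|}$ for every $\xi \in X$, by using $\|x_{jk}-\xi\| \leq \|x_{jk}\| + \|\xi\|$ in place of the norm equality above; but a single choice of $\xi = 0$ and $r = T$ already suffices to finish the proof. There is no real obstacle here: the proof is essentially a monotonicity argument combined with the hereditary property of the ideal $I$, and the only thing to be careful about is keeping the thresholds $T$ and $T+\varepsilon$ aligned so that the density-level inclusion produces a set already known to belong to $I$.
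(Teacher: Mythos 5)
Your proof is correct, but it establishes a stronger statement than the paper's and by a different mechanism. In the paper, ``bounded'' in this theorem means bounded in the ordinary sense: the proof fixes $T$ with $\|x_{jk}\| < T$ for all $(j,k)$ and simply observes that $\{(j,k) : \|x_{jk}-0\| \geq T+\varepsilon\}$ is empty, so every density set in the definition of rough $I$-statistical convergence is empty and hence trivially belongs to $I$. The distinction matters for the surrounding text, since the Note after the theorem and Theorem 3.4 are precisely about contrasting ordinary boundedness with $I$-statistical boundedness. You instead assume only $I$-statistical boundedness and run a monotonicity argument: the pointwise inclusion $\{(j,k):\|x_{jk}\| \geq T+\varepsilon\} \subseteq \{(j,k):\|x_{jk}\| \geq T\}$ passes to the counting functions, so the exceptional set at level $\delta$ is contained in $A_\delta \in I$, and the hereditary property of the ideal finishes the job. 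Since ordinary boundedness trivially implies $I$-statistical boundedness (the set $\{(j,k): \|x_{jk}\| \geq T\}$ is already empty), your argument does prove the stated theorem; in fact it is essentially the forward implication of the paper's Theorem 3.4, where the authors take $r' = \sup\{\|x_{jk}\| : (j,k) \in A^c\}$ --- your choice $r = T$ is cleaner and sidesteps any concern about that supremum. What you give up is only the one-line simplicity of the paper's empty-set argument in the genuinely bounded case.
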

\begin{proof}
Let $x = \{ x_{jk}\}_{ j,k \in \mathbb{N}}$ be a bounded double sequence. Then there exists a positive
real number $T$ such that $\parallel x_{jk} \parallel < T$ for all
$(j,k) \in \mathbb{N} \times \mathbb{N}$. Let $\varepsilon>0$ be given. Then 
\begin{center}
$\left\{(j,k):\left\|x_{jk}-0\right\|\geq T+\varepsilon\right\}=\emptyset$.
\end{center}
 Therefore $0 \in I\mbox{-}st\mbox{-}LIM_x^T$ and so $I\mbox{-}st\mbox{-}LIM_x^T
\neq \emptyset$.
\end{proof}
\begin{note}
The converse of the above theorem is not true. For example,
let us consider the double sequence $x = \{ x_{jk}\}_{ j,k \in
\mathbb{N}}$ in $\mathbb{R}$ defined by
\begin{eqnarray*}
x_{jk} &=& jk ~~~~, \mbox{if}~ j ~and~ k~ are~ squares \\
       &=& 3 ~~~~, ~ otherwise.
\end{eqnarray*}
Then $I\mbox{-}st\mbox{-}LIM_x^0= \{3\} \neq \emptyset$ but the double sequence
$x$ is unbounded.
\end{note}

We now show that the converse of Theorem 3.2 is true if the double sequence is $I$-statistically bounded.


\begin{thm}
A  double sequence $ x=\left\{x_{jk}\right\}_{j,k\in\mathbb N} $ in $X$ is
$I$-statistically bounded if and only if there exists a non
negative real number $ r $ such that $
I\mbox{-}st\mbox{-}LIM_x^r \neq \emptyset $.
\end{thm}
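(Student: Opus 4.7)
The plan is to prove both implications by a direct set-inclusion argument combined with the triangle inequality, closely paralleling the style of Theorem 3.2.

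For the forward direction, I would assume $x$ is $I$-statistically bounded with witness $T > 0$ and claim that $0 \in I\mbox{-}st\mbox{-}LIM_x^T$, which automatically gives nonemptiness. For any $\varepsilon > 0$ and $\delta > 0$, the pointwise inclusion $\{(j,k) : \|x_{jk}\| \geq T + \varepsilon\} \subseteq \{(j,k) : \|x_{jk}\| \geq T\}$ passes to the truncations at $j \leq m, k \leq n$, and hence to the level-set $\{(m,n) : \frac{1}{mn}|\{\cdot\}| \geq \delta\}$. Since that larger level-set lies in $I$ by $I$-statistical boundedness, the smaller one does too by heredity of $I$. This is exactly the definition of rough $I$-statistical convergence of $x$ to $0$ with roughness degree $T$.

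For the converse, I would fix any $\xi \in I\mbox{-}st\mbox{-}LIM_x^r$ and set $T = \|\xi\| + r + 1$. The triangle inequality yields that $\|x_{jk}\| \geq T$ forces $\|x_{jk} - \xi\| \geq r + 1$, so $\{(j,k) : \|x_{jk}\| \geq T\} \subseteq \{(j,k) : \|x_{jk} - \xi\| \geq r + 1\}$. Invoking the definition of $r$-$I$-statistical convergence at $\xi$ with the single specific choice $\varepsilon = 1$ places $\{(m,n) : \frac{1}{mn}|\{(j,k) : j\leq m, k \leq n, \|x_{jk} - \xi\| \geq r + 1\}| \geq \delta\}$ in $I$ for every $\delta > 0$; the induced level-set inclusion then drags the corresponding $I$-statistical boundedness set for $T$ into $I$ as well.

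No real obstacle arises; the only care required is keeping track of the nested quantifiers $\forall \varepsilon, \forall \delta$ in both definitions and fixing the single value $\varepsilon = 1$ in the converse to extract an explicit constant $T$. All that is used beyond this is monotonicity of counting and heredity of the ideal $I$.
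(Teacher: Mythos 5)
Your proof is correct and follows essentially the same route as the paper: both directions are handled by a pointwise set inclusion that passes to the truncated counts and then to the level sets, using heredity of $I$. Your constants are in fact slightly more robust than the paper's (in the forward direction you take $r=T$ directly rather than the paper's $r'=\sup\{\|x_{jk}\|:(j,k)\in A^{c}\}$, and in the converse your choice $\varepsilon=1$ avoids the degenerate case $\varepsilon=\|\xi\|=0$ that arises in the paper when $\xi=0$), but the underlying argument is the same.
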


\begin{proof}
Let $ x=\left\{x_{jk}\right\}_{k\in\mathbb N} $ be an
$I$-statistically bounded double sequence in $X$. Then there exists a
positive real number $T$ such that for $\delta > 0 $ we have $\{ (m,n)
: \frac{1}{mn}|\{(j,k):j\leq m; k \leq n, \|x _{jk} \| \geq T \}| \geq \delta \} \in
I $. Let $ A = \{ (j,k): \|x_{jk} \| \geq T \} $. Then $
I\mbox{-}\underset{ m,n \rightarrow \infty}{\lim}\frac{1}{mn}|\{(j,k) :j\leq m;k
\leq n,(j, k) \in A \}| = 0 $. Let $ r' = sup\{\| x_{jk}\| : (j,k) \in
A^c\}$. Then $ 0\in I\mbox{-}st\mbox{-}LIM^{r'}_x $. Hence $ I\mbox{-}st\mbox{-}LIM^r_x \neq \emptyset
$ for $ r = r'$.

Conversely, let $ I\mbox{-}st\mbox{-}LIM_x^r \neq \emptyset $ for
some $ r \geq 0 $. Let $ \xi \in I\mbox{-}st\mbox{-}LIM_x^r $. Choose
$\varepsilon = \| \xi \| $. Then for each $ \delta
> 0 $, $ \{ (m,n)\in \mathbb{N\times N}: \frac{1}{mn}|\{(j,k):j\leq m;k \leq n,
\|x_{jk} - \xi \| \geq r + \varepsilon \}| \geq \delta \} \in I $.
Now taking $ T = r + 2 \| \xi \|$, we have $ \{ (m,n) \in \mathbb{N\times N}:
\frac{1}{mn} | \{ (j,k):j\leq m;k \leq n , \|x_{jk}\| \geq T \}| \geq \delta \} \in
I $. Therefore $ x $ is $I$-statistically bounded.
\end{proof}

Now let $ \{j_p\}_{p \in \mathbb{N}} $ and $ \{k_q\}_{q \in \mathbb{N}}$ be two strictly increasing sequences
of natural numbers. If $ x = \{ x_{jk}\}_{j,k \in \mathbb{N}}$ is a double sequence in $(X, \parallel.\parallel)$,
then we define $\{x_{j_{p}k_{q}}\}_{p,q \in \mathbb{N}}$ as a subsequence of $x$.
\begin{defn}
A subsequence $x^{\prime} = \{x_{j_{p}k_{q}}\}_{p,q \in \mathbb{N}}$ of a double sequence $x = \{x_{jk}\}_{j,k \in \mathbb{N}}$
is called a $I$-dense subsequence if $d_I(\{(j_{p}, k_{q}); p, q \in \mathbb{N} \}) = 1 $. 
\end{defn}

In (\cite{Ma1}, Theorem 3.4), Malik et. al. have already shown that if $x^{'} = \{x_{{j_p}{k_q}}\}_{p,q \in \mathbb{N}}$ is a subsequence
of $ x = \{ x_{jk}\}_{j,k \in \mathbb{N}}$, then $LIM_x^r \subset LIM_{x^{'}}^r$. But this result is not true for rough $I$-statistical convergence. 
To show this we consider the following example.

\begin{Example}
Consider the double sequence $ x = \{ x_{jk}\}_{j,k \in \mathbb{N}}$ in $\mathbb{R}$ defined by
\begin{eqnarray*}
x_{jk} &=& jk ~~~~, \mbox{if j and k both are cubes} \\
       &=& 0 ~~~~, \mbox{otherwise}.
\end{eqnarray*}
Then $x^{\prime} = \{x_{j_{p}k_{q}}\}_{p,q \in \mathbb{N}}$ is a subsequence of $x$, where
$j_{p} = p^{3}, p \in \mathbb{N} $ and $k_{q} = q^{3}, q \in \mathbb{N} $ and for all $ r \geq 0 $,
$I\mbox{-}st\mbox{-}LIM_x^r = [-r,r]$ but $I\mbox{-}st\mbox{-}LIM_{x'}^r = \emptyset $.
\end{Example}
We now present $I$-statistical analogue of Theorem 3.4 \cite{Ma1} in the following form.

\begin{thm}
If $x'=\{x_{{j_p}{k_q}}\}_{p,q\in \mathbb{N}}$ is a $I$-dense subsequence of
$x = \{ x_{jk}\}_{ j,k \in \mathbb{N}}$, then $ I\mbox{-}st\mbox{-}LIM_x^r \subset I\mbox{-}st\mbox{-}LIM_{x'}^r $.
\end{thm}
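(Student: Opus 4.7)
The strategy is to take $\xi \in I\mbox{-}st\mbox{-}LIM_x^r$, fix $\varepsilon, \delta > 0$, and show the set
\[
A' := \Bigl\{(M,N) : \tfrac{1}{MN}\bigl|\{(p,q) : p \leq M,\, q \leq N,\, \|x_{j_p k_q} - \xi\| \geq r + \varepsilon\}\bigr| \geq \delta\Bigr\}
\]
lies in $I$, which will give $\xi \in I\mbox{-}st\mbox{-}LIM_{x'}^r$. First, I would introduce $E := \{(j,k) : \|x_{jk}-\xi\| \geq r+\varepsilon\}$ and $S := \{(j_p, k_q) : p,q \in \mathbb{N}\}$. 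Since $\{j_p\}$ and $\{k_q\}$ are strictly increasing, the map $(p,q) \mapsto (j_p, k_q)$ is a bijection from $\{1,\dots,M\} \times \{1,\dots,N\}$ onto $S \cap ([j_M]\times[k_N])$, where $[m] := \{1,\dots,m\}$. Hence the numerator in the definition of $A'$ equals $|E \cap S \cap ([j_M]\times[k_N])|$, while $|S \cap ([j_M]\times[k_N])| = MN$.

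The next step is the key estimate
\[
\frac{|E \cap S \cap ([j_M]\times[k_N])|}{MN} \leq \frac{|E \cap ([j_M]\times[k_N])|}{j_M k_N} \cdot \frac{j_M k_N}{MN},
\]
in which the second factor is the reciprocal of the partial $S$-density at $(j_M, k_N)$. I would then split $A'$ according to whether $MN/(j_M k_N) \geq 1/2$ or not. On the first piece, the estimate forces $(j_M, k_N)$ into the set $B := \{(m,n) : |E \cap ([m]\times[n])|/mn \geq \delta/2\}$, which belongs to $I$ by the $r$-$I$-statistical convergence of $x$ to $\xi$ applied with tolerance $\delta/2$. On the second piece, $(j_M, k_N)$ lies in $C := \{(m,n) : |S \cap ([m]\times[n])|/mn < 1/2\}$, which belongs to $I$ because $d_I(S) = 1$. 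So $A' \subseteq \{(M,N) : (j_M, k_N) \in B \cup C\}$ with $B \cup C \in I$.

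The main obstacle is transferring the $I$-smallness of $B \cup C$ through the index change $(M,N) \mapsto (j_M, k_N)$: one needs to verify that the preimage of this set remains in $I$. I would handle this by exploiting the product form $S = \{j_p\}_{p} \times \{k_q\}_{q}$ together with the inequalities $j_M \geq M$ and $k_N \geq N$, and by invoking strong admissibility of $I$ (which places every ``cross'' set $\{i\}\times\mathbb{N}$ and $\mathbb{N}\times\{i\}$ in $I$) in combination with $d_I(S)=1$ to sandwich the preimage inside an explicit $I$-set built from finitely many such cross sets. Once this technical step is completed, the inclusion above yields $A' \in I$, and hence $\xi \in I\mbox{-}st\mbox{-}LIM_{x'}^r$, which finishes the argument.
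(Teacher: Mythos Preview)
Your argument is considerably more careful than the paper's, and you have correctly isolated the genuine difficulty. The paper's own proof simply records the inclusion $\{(p,q):\|x_{j_pk_q}-\xi\|<r+\varepsilon\}\supset A^c(\varepsilon)\cap K$ (which, as written, identifies a set of $(p,q)$-indices with a set of $(j,k)$-indices) and then passes directly from $d_I(A^c(\varepsilon)\cap K)=1$ to $d_I\bigl((A'(\varepsilon))^c\bigr)=1$, with no justification for why the $I$-asymptotic density survives the reindexing $(p,q)\leftrightarrow(j_p,k_q)$. Your key estimate and the split according to whether $MN/(j_Mk_N)\geq 1/2$ are correct, and they reduce the whole question to a single clean statement: whether $\phi^{-1}(B\cup C)\in I$ for the map $\phi(M,N)=(j_M,k_N)$, given that $B\cup C\in I$.

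That reduction is real progress over the paper, but your proposed resolution of this last step is not an argument. Strong admissibility only guarantees that each individual cross $\{i\}\times\mathbb{N}$ or $\mathbb{N}\times\{i\}$ lies in $I$; it provides no general mechanism for pulling back an arbitrary element of $I$ through an increasing coordinatewise injection. The phrase ``sandwich the preimage inside an explicit $I$-set built from finitely many such cross sets'' would force $\phi^{-1}(B\cup C)$ to sit inside a \emph{finite} union of crosses, and there is no reason for that to hold when $B\cup C$ is infinite. The inequalities $j_M\geq M$, $k_N\geq N$ by themselves only give $\phi^{-1}(D)\subset\{(M,N):\exists\,(m,n)\in D,\ m\geq M,\ n\geq N\}$, which can be all of $\mathbb{N}\times\mathbb{N}$. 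In short, you have pinpointed precisely the gap that the paper's proof glosses over, but you have not closed it; what would actually be needed is a compatibility condition between $I$ and the map $\phi$ (for instance, that $I$ is itself determined by $I$-asymptotic density together with the growth control $j_Mk_N/MN\to 1$ in the $I$-sense), and neither your outline nor the paper supplies it.
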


\begin{proof}
Let $\xi \in I\mbox{-}st\mbox{-}LIM_x^r$. Then for any $\varepsilon > 0$, $d_I(A(\varepsilon)) = 0$, where
$A(\varepsilon) = \{(j,k) \in \mathbb{N} \times \mathbb{N}; \parallel x_{jk} - \xi \parallel \geq r + \varepsilon\}$.
Then $d_I(A^c(\varepsilon)) = 1$.\\
Since $x'=\{x_{{j_p}{k_q}}\}_{p,q\in \mathbb{N}} $ is a $I$-dense subsequence of $x$, so $d_I(K) = 1 $, where $K = \{(j_{p}, k_{q} ); p, q \in \mathbb{N} \}$.
Then $d_I(A^{c} (\varepsilon) \cap K ) = 1 $. \\
Let $A^{'}(\varepsilon) = \{(p,q) \in \mathbb{N} \times \mathbb{N}; \parallel x_{{j_p}{k_q}} - \xi \parallel \geq r + \varepsilon\}$. Now $\{(p,q); \parallel x_{{j_p}{k_q}} - \xi \parallel < r + \varepsilon \} \supset A^{c} (\varepsilon) \cap K $. Therefor $d_I(A^{\prime}(\varepsilon))^{c} = 1 $ and so $d_I(A^{\prime}(\varepsilon)) =0 $-which 
implies $ \xi \in I\mbox{-}st\mbox{-}LIM_{x'}^r $.
\end{proof}


\begin{thm}
Let $ x=\left\{x_{jk}\right\}_{j,k\in\mathbb N}$ be double sequence and $ r \geq 0 $ be a real number. Then the rough $I$-statistical limit set of the double sequence $ x $ i.e., the set
$ I\mbox{-}st\mbox{-}LIM_x^r $ is closed.
\end{thm}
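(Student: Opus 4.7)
The plan is to prove closedness by the usual sequential characterization: take an arbitrary convergent sequence $\{y_l\}_{l\in\mathbb{N}}$ in $I\text{-}st\text{-}LIM_x^r$ with $y_l\to y^*$ in $(X,\|\cdot\|)$, and show $y^*\in I\text{-}st\text{-}LIM_x^r$.

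First, I would fix arbitrary $\varepsilon>0$ and $\delta>0$, and the task is to verify that
\[
M(\varepsilon,\delta):=\bigl\{(m,n)\in\mathbb{N}\times\mathbb{N}:\tfrac{1}{mn}\bigl|\{(j,k):j\le m,k\le n,\|x_{jk}-y^*\|\ge r+\varepsilon\}\bigr|\ge\delta\bigr\}\in I.
\]
Since $y_l\to y^*$, I would choose $l_0$ so large that $\|y_{l_0}-y^*\|<\varepsilon/2$. Because $y_{l_0}\in I\text{-}st\text{-}LIM_x^r$, I can apply the definition of rough $I$-statistical convergence with roughness $r$ and tolerance $\varepsilon/2$: the set
\[
A:=\bigl\{(m,n):\tfrac{1}{mn}\bigl|\{(j,k):j\le m,k\le n,\|x_{jk}-y_{l_0}\|\ge r+\tfrac{\varepsilon}{2}\}\bigr|\ge\delta\bigr\}
\]
lies in $I$.

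The core step is the triangle inequality: if $\|x_{jk}-y^*\|\ge r+\varepsilon$, then
\[
\|x_{jk}-y_{l_0}\|\ge \|x_{jk}-y^*\|-\|y^*-y_{l_0}\|>r+\varepsilon-\tfrac{\varepsilon}{2}=r+\tfrac{\varepsilon}{2}.
\]
Hence for every fixed $(m,n)$, the index set for $y^*$ sits inside the index set for $y_{l_0}$, giving
\[
\tfrac{1}{mn}\bigl|\{(j,k):j\le m,k\le n,\|x_{jk}-y^*\|\ge r+\varepsilon\}\bigr|\le \tfrac{1}{mn}\bigl|\{(j,k):j\le m,k\le n,\|x_{jk}-y_{l_0}\|\ge r+\tfrac{\varepsilon}{2}\}\bigr|.
\]
Therefore $M(\varepsilon,\delta)\subset A$, and since $I$ is closed under subsets, $M(\varepsilon,\delta)\in I$. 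As $\varepsilon,\delta>0$ were arbitrary, this shows $y^*\in I\text{-}st\text{-}LIM_x^r$.

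There is no real obstacle; the argument is essentially the standard closedness proof for rough limit sets, and the only care needed is the $\varepsilon/2$-splitting so that the triangle inequality produces an inclusion of index sets at each fixed $(m,n)$, allowing hereditariness of the ideal $I$ to do the final work.
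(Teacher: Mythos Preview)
Your proof is correct and follows essentially the same approach as the paper: pick an approximant $y_{l_0}$ within $\varepsilon/2$ of the limit point, apply the definition of rough $I$-statistical convergence to $y_{l_0}$ with tolerance $\varepsilon/2$, and use the triangle inequality together with hereditariness of $I$. The only cosmetic differences are that the paper uses a double-indexed approximating sequence and passes through the complementary ``good'' counts (showing $|B_{mn}|/mn\ge 1-\delta$) before returning to the ``bad'' set, whereas you argue the inclusion of bad index sets directly; the substance is identical.
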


\begin{proof}
If $ I\mbox{-}st\mbox{-}LIM_x^r  = \emptyset $, then nothing to prove.\\
Let us assume that $ I\mbox{-}st\mbox{-}LIM_x^r \neq \emptyset $. Now consider a double sequence $\{y_{jk}\}_{j,k \in \mathbb{N}}$
in $ I\mbox{-}st\mbox{-}LIM_x^r $ with $ \lim\limits_{\stackrel{\stackrel{j\rightarrow
\infty}{k\rightarrow \infty}} ~}y_{jk} = y $.
Choose $ \varepsilon > 0 $ and $ \delta > 0 $. Then there exists $ i_{\frac{\varepsilon}{2}} \in \mathbb{N} $ such that $ \|y_{jk} - y \| < \frac{\varepsilon}{2} $ for all $ j > i_{\frac{\varepsilon}{2}}$ and  $ k > i_{\frac{\varepsilon}{2}}$. 
Let $ j_0 > i_{\frac{\varepsilon}{2}}$ and  $ k_0 > i_{\frac{\varepsilon}{2}}$. Then $ y_{j_0k_0} \in I\mbox{-}st\mbox{-}LIM_x^r $.
Consequently, we have 
$ A = \{ (m,n)\in\mathbb{N\times N} : \frac {1}{mn} | \{ (j,k):j\leq m;k \leq n , \|x_{jk} - y_{j_0k_0}\| \geq r + \frac{\varepsilon}{2}\}| \geq \delta \} \in I $. Clearly $ M = \mathbb{N\times N} \setminus A $ is nonempty, choose $ (m,n) \in M $.
We have 
\begin{eqnarray*}
\frac{1}{mn} | \{ (j,k):j\leq m;k \leq n , \|x_{jk} - y_{j_0k_0} \| \geq r + \frac{\varepsilon}{2} \}|  < \delta.
\end{eqnarray*}
\begin{eqnarray*}
\Rightarrow\frac{1}{mn} | \{ (j,k):j\leq m;k \leq n , \|x_{jk} - y_{j_0k_0} \| < r + \frac{\varepsilon}{2} \}|  \geq 1 - \delta.
\end{eqnarray*}
Put $ B_{mn} = \{ (j,k):j\leq m;k \leq n , \|x_{jk} - y_{j_0k_0} \| < r + \frac{\varepsilon}{2} \} $. Choose $(j, k) \in B_{mn} $.
Then 
\begin{eqnarray*}
\|x_{jk} - y \| \leq \| x_{jk} - y_{j_0k_0}\| + \|y_{j_0k_0} - y \| < r + \frac{\varepsilon}{2} + \frac{\varepsilon}{2} 
= r + \varepsilon.
\end{eqnarray*}
Hence $ B_{mn} \subset \{ (j,k):j\leq m;k \leq n  , \|x_{jk} - y \| < r + \varepsilon \}$, which implies
\begin{center}
$ 1-\delta \leq\frac{ |B_{mn}|}{mn} \leq \frac{1}{mn}\left|\{ (j,k):j\leq m;k \leq n , \|x_{jk} - y \| < r + \varepsilon\}\right| $.
\end{center}
Therefore $\frac{1}{mn}|\{ (j,k):j\leq m;k \leq n , \|x_{jk} - y \| \geq r + \varepsilon\}| < 1-(1-\delta)=\delta$.\\
Thus we have
\begin{eqnarray*}
\{(m, n): \frac{1}{mn} | \{ (j,k):j\leq m;k \leq n, \|x_{jk} - y \| \geq r + \varepsilon \}| \geq \delta \} \subset A \in I.
\end{eqnarray*}
This shows that $ y \in I\mbox{-}st\mbox{-}LIM_x^r $. Hence $ I\mbox{-}st\mbox{-}LIM_x^r $ is a closed set.

\end{proof}


\begin{thm}
Let $ x=\left\{x_{jk}\right\}_{j,k\in\mathbb N}$ be double sequence and $ r \geq 0 $ be a real number. Then the rough $I$-statistical limit set 
$ I\mbox{-}st\mbox{-}LIM_x^r $ of the double sequence $ x $ is a convex set.

\end{thm}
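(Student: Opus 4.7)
The plan is to show convexity directly: take arbitrary $y_1,y_2\in I\mbox{-}st\mbox{-}LIM_x^r$ and $\lambda\in[0,1]$, and verify that $y_\lambda:=(1-\lambda)y_1+\lambda y_2\in I\mbox{-}st\mbox{-}LIM_x^r$. If the set is empty or a singleton there is nothing to do, so assume two distinct elements. Fix $\varepsilon>0$ and $\delta>0$; the task reduces to controlling the $I$-statistical density of
\[
C(\varepsilon)=\{(j,k)\in\mathbb{N}\times\mathbb{N}:\|x_{jk}-y_\lambda\|\geq r+\varepsilon\}.
\]

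The key pointwise observation is the convex-combination triangle inequality: for every $(j,k)$,
\[
\|x_{jk}-y_\lambda\|\leq (1-\lambda)\|x_{jk}-y_1\|+\lambda\|x_{jk}-y_2\|.
\]
Setting $A_i(\varepsilon)=\{(j,k):\|x_{jk}-y_i\|\geq r+\varepsilon\}$ for $i=1,2$, I would observe that if $(j,k)\notin A_1(\varepsilon)\cup A_2(\varepsilon)$, then the right-hand side is strictly less than $(1-\lambda)(r+\varepsilon)+\lambda(r+\varepsilon)=r+\varepsilon$. Hence
\[
C(\varepsilon)\subset A_1(\varepsilon)\cup A_2(\varepsilon),
\]
and consequently, for every $(m,n)\in\mathbb{N}\times\mathbb{N}$,
\[
\tfrac{1}{mn}|C(\varepsilon)(m,n)|\leq\tfrac{1}{mn}|A_1(\varepsilon)(m,n)|+\tfrac{1}{mn}|A_2(\varepsilon)(m,n)|.
\]

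Next I would transfer this inclusion to the level of the ideal. Since $y_1,y_2\in I\mbox{-}st\mbox{-}LIM_x^r$, the sets
\[
B_i=\{(m,n):\tfrac{1}{mn}|A_i(\varepsilon)(m,n)|\geq\tfrac{\delta}{2}\},\qquad i=1,2,
\]
belong to $I$. For $(m,n)\notin B_1\cup B_2$ the displayed inequality gives $\tfrac{1}{mn}|C(\varepsilon)(m,n)|<\delta$, so
\[
\{(m,n):\tfrac{1}{mn}|C(\varepsilon)(m,n)|\geq\delta\}\subset B_1\cup B_2\in I,
\]
since $I$ is closed under finite unions. Thus this set lies in $I$, which is exactly the defining condition for $y_\lambda\in I\mbox{-}st\mbox{-}LIM_x^r$. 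As $\varepsilon,\delta>0$ were arbitrary, convexity follows.

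The argument is essentially routine once the two-step splitting (first at the pointwise level into $A_1\cup A_2$, then at the density level using the $\delta/2+\delta/2$ trick) is recognized. The only subtle point, and the one I would be most careful about, is ensuring the threshold is split correctly so that the union of the two exceptional $(m,n)$-sets lies in $I$; this is why $\delta/2$ (rather than $\delta$) is used in the definition of $B_1,B_2$. No genuinely hard obstacle is expected, since the convex combination fits cleanly inside the closed ball of radius $r+\varepsilon$ whenever each summand does.
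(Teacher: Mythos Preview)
Your proof is correct and follows essentially the same route as the paper: both arguments rest on the pointwise inclusion $C(\varepsilon)\subset A_1(\varepsilon)\cup A_2(\varepsilon)$ obtained from the convex-combination triangle inequality, and then pass to the ideal level. The only difference is bookkeeping at the threshold step: the paper first invokes the argument of Theorem~3.1 to get that $A_1(\varepsilon)\cup A_2(\varepsilon)$ has $I$-asymptotic density zero and then introduces an auxiliary $\delta_1$ with $1-\delta_1<\delta$, whereas your direct $\delta/2+\delta/2$ split is cleaner and avoids that detour.
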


\begin{proof}
Let $ y_0, y_1 \in I\mbox{-}st\mbox{-}LIM_x^r $. Let $ \varepsilon > 0 $ be given. Let\\
\begin{eqnarray*}
A_0 = \{ (j,k) \in \mathbb{N\times N} : \| x_{jk} - y_0 \| \geq r + \varepsilon \}~ \\
A_1 = \{ (j,k) \in \mathbb{N\times N} : \| x_{jk} - y_1 \| \geq r + \varepsilon \}. 
\end{eqnarray*}
Then by theorem 3.1 for $ \delta > 0 $ we have 
$ \{ (m,n) : \frac{1}{mn} |\{(j,k):j\leq m;k \leq n ,(j, k) \in A_0 \cup A_1 \}| \geq \delta \} \in I $.
Choose $ 0 < {\delta}_1 < 1 $ such that $ 0 < 1 - {\delta}_1 < \delta $.
Let $ A = \{ (m,n) : \frac{1}{mn} |\{(j,k):j\leq m;k \leq n ,(j, k) \in A_0 \cup A_1 \}| \geq {1-\delta}_1 \}$.
Then $ A \in I $. Now for all $ (m,n) \notin A $ we have 
\begin{eqnarray*}
\frac{1}{mn} |\{(j,k):j\leq m;k \leq n ,(j, k )\in A_0 \cup A_1 \}| < 1 - {\delta}_1 
\end{eqnarray*}
\begin{eqnarray*}
\Rightarrow\frac{1}{mn} |\{(j,k):j\leq m;k \leq n ,(j, k) \notin A_0 \cup A_1 \}| \geq \left\{1-(1 - {\delta}_1)\right\}=\delta_1. 
\end{eqnarray*}
Therefore $ \{ (j,k):(j, k) \notin A_0 \cup A_1 \}$ is a nonempty set. Let us take $ (j_0,k_0) \in {A_0}^c \cap {A_1}^c $ and
$ 0 \leq \lambda \leq 1 $. Then
\begin{eqnarray*}
 \|x_{j_0k_0} - [(1 - \lambda)y_0 + {\lambda}y_1] \|
& = & \|(1 - \lambda)x_{j_0k_0} + {\lambda}x_{j_0k_0} - [(1 - \lambda)y_0 + {\lambda}y_1] \|\\ 
& \leq & (1 - \lambda)\|x_{j_0k_0} - y_0 \| + \lambda \|x_{j_0k_0} - y_1\|\\
& < & (1 - \lambda)(r + \varepsilon) + \lambda(r + \varepsilon) = r + \varepsilon.
\end{eqnarray*}
 Let $ B = \{ (j,k) \in \mathbb{N\times N}: \| x_{jk} - [(1-\lambda)y_0 + {\lambda}y_1]\|\geq r + \varepsilon\}$.
Then clearly, $ {A_0}^c \cap {A_1}^c \subset B^c $. So for $ (m,n) \notin A $,
\begin{eqnarray*}
{\delta}_1 \leq \frac{1}{mn}|\{ (j,k):j\leq m;k \leq n,(j, k) \notin A_0 \cup A_1 \} \leq \frac{1}{mn}|\{ (j,k):j\leq m;k \leq n ,(j, k) \notin B \}.
\end{eqnarray*}
\begin{eqnarray*}
\Rightarrow\frac{1}{mn}|\{ (j,k):j\leq m;k \leq n,(j, k) \in B\}| < 1 - {\delta}_1 < \delta.
\end{eqnarray*}
 Thus $ A^c \subset \{ (m,n) : \frac{1}{mn}|\{ (j,k):j\leq m;k \leq n,(j, k )\in B\}| < \delta \}$.
Since $ A^c \in F(I) $, so $\{(m, n) : \frac{1}{mn}|\{ (j,k):j\leq m;k \leq n,(j, k) \in B\}| < \delta \} \in F(I)$ and so $\{ (m,n) : \frac{1}{mn}|\{ (j,k):j\leq m;k \leq n,(j, k) \in B\}| \geq \delta \} \in I $.
This completes the proof.

\end{proof}


\begin{thm}
Let $ r > 0 $. Then a double sequence $ x=\left\{x_{jk}\right\}_{j,k\in\mathbb N}$ is rough $I$-statistically convergent to $ \xi $ if and only if there exists a double sequence $ y = \{y_{jk}\}_{j,k \in \mathbb{N}} $ such that $ {I\mbox{-}st\mbox{-}\lim}~ y = \xi $ and $ \parallel x_{jk} - y_{jk} \parallel \leq r $ for all $(j, k) \in \mathbb{N\times N} $.
\end{thm}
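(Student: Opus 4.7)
The plan is to establish the biconditional in the usual two directions, with all the nontrivial content lying in the construction of the witness sequence $y$ for the necessity direction.

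For sufficiency, assuming some double sequence $y$ with $\|x_{jk}-y_{jk}\|\leq r$ is $I$-statistically convergent to $\xi$, I would apply the triangle inequality to observe that $\|x_{jk}-\xi\|\geq r+\varepsilon$ forces $\|y_{jk}-\xi\|\geq\varepsilon$. This yields the inclusion
$$\{(j,k):j\leq m,\,k\leq n,\,\|x_{jk}-\xi\|\geq r+\varepsilon\}\subset\{(j,k):j\leq m,\,k\leq n,\,\|y_{jk}-\xi\|\geq\varepsilon\}$$
for every $(m,n)$. Dividing by $mn$ and applying the monotonicity of the ideal $I$ to the outer condition $\{(m,n):\cdot\geq\delta\}$, the rough $I$-statistical convergence of $x$ to $\xi$ falls out of the $I$-statistical convergence of $y$.

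For necessity, assume $x$ is rough $I$-statistically convergent to $\xi$. The key step is to construct $y$ as a radial retraction of $x$ onto the closed ball $\overline{B_r(\xi)}$: set $y_{jk}=\xi$ whenever $\|x_{jk}-\xi\|\leq r$, and otherwise set $y_{jk}=x_{jk}+r\frac{\xi-x_{jk}}{\|x_{jk}-\xi\|}$. A short norm computation along the segment from $x_{jk}$ to $\xi$ shows that $\|x_{jk}-y_{jk}\|\leq r$ always, and more importantly that $\|y_{jk}-\xi\|=\max\{0,\,\|x_{jk}-\xi\|-r\}$. Hence for any $\varepsilon>0$, the inequality $\|y_{jk}-\xi\|\geq\varepsilon$ is \emph{equivalent} to $\|x_{jk}-\xi\|\geq r+\varepsilon$, so the two index sets coincide for every $(m,n)$, and the $I$-statistical convergence of $y$ to $\xi$ is literally the same $I$-condition as the rough $I$-statistical convergence of $x$ to $\xi$.

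The main conceptual obstacle is spotting the correct formula for $y$; once the radial retraction is in hand, no further ideal-theoretic or density machinery is required, because the set equality $\{\|y_{jk}-\xi\|\geq\varepsilon\}=\{\|x_{jk}-\xi\|\geq r+\varepsilon\}$ transfers the convergence exactly in both directions. The double-index structure and the ideal $I$ play no active role beyond their appearance in the definition, so the argument goes through in any normed linear space without additional hypotheses.
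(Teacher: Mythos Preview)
Your proposal is correct and follows essentially the same approach as the paper: the triangle inequality for the sufficiency direction, and the radial retraction $y_{jk}=\xi$ or $y_{jk}=x_{jk}+r\,\frac{\xi-x_{jk}}{\|x_{jk}-\xi\|}$ for the necessity direction. Your observation that $\{\|y_{jk}-\xi\|\geq\varepsilon\}=\{\|x_{jk}-\xi\|\geq r+\varepsilon\}$ is in fact a bit sharper than the paper's one-sided inclusion argument via complementary sets, but the construction and the underlying idea are identical.
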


\begin{proof}
Let $ y = \{y_{jk}\}_{j,k \in \mathbb{N}}$ be a double sequence in $X$, which is $I$-statistically convergent to $\xi$ and $\|x_{jk} - y_{jk} \| \leq r $ for all $(j, k) \in \mathbb{N\times N} $. Then for any $ \varepsilon > 0 $ and $ \delta > 0 $ the set
$ A = \{ (m,n)\in\mathbb{N\times N}: \frac{1}{mn}|\{ (j,k):j\leq m;k \leq n , \|y_{jk} - \xi \|\geq \varepsilon \} \geq \delta \} \in I $
. Let $ (m,n) \notin A $. Then we have
\begin{eqnarray*}
\frac{1}{mn} |\{(j,k):j\leq m;k \leq n , \|y_{jk} - \xi \| \geq \varepsilon \}| < \delta
\end{eqnarray*}
\begin{eqnarray*}
\Rightarrow\frac{1}{mn}|\{ (j,k):j\leq m;k \leq n , \|y_{jk} - \xi \| < \varepsilon \}|  \geq 1 - \delta.
\end{eqnarray*}
Let $ B_{mn} = \{ (j,k):j\leq m;k \leq n  , \|y_{jk} - \xi \| < \varepsilon \}$. Then for $ (j,k) \in B_{mn} $, we have
\begin{eqnarray*}
\|x_{jk} - \xi \| \leq \| x_{jk} - y_{jk} \| + \|y_{jk} - \xi \| < r + \varepsilon.
\end{eqnarray*}
Therefore, 
\begin{eqnarray*}
B_{mn} \subset \{ (j,k):j\leq m;k \leq n, \|x_{jk} - \xi \| < r + \varepsilon \}.
\end{eqnarray*} 
\begin{eqnarray*}
\Rightarrow\frac{|B_{mn}|}{mn} \leq \frac{1}{mn} | \{ (j,k):j\leq m;k \leq n , \|x_{jk} - \xi \| < r + \varepsilon \}|.
\end{eqnarray*}
\begin{eqnarray*}
\Rightarrow\frac{1}{mn} |\{ (j,k):j\leq m;k \leq n , \|x_{jk} - \xi \| < r + \varepsilon \}| \geq 1 - \delta.
\end{eqnarray*}
\begin{eqnarray*}
\Rightarrow\frac{1}{mn} |\{ (j,k):j\leq m;k \leq n , \| x_{jk} - \xi \| \geq r + \varepsilon \}| < 1 - (1 - \delta) = \delta.
\end{eqnarray*}
Thus, $\{ (m,n)\in\mathbb{N\times N}: \frac{1}{mn}|\{ (j,k):j\leq m;k \leq n , \|x_{jk} - \xi \| \geq r + \varepsilon \}|  \geq \delta \} \subset A $ and since $ A \in I $, we have $ \{ (m,n)\in\mathbb{N\times N} : \frac{1}{mn}|\{ (j,k):j\leq m;k \leq n , \|x_{jk} - \xi \| \geq r + \varepsilon \}|\geq\delta\} \in I $. Hence $ {I\mbox{-}st\mbox{-}\lim}~ x = \xi $.\\
Conversely, suppose that $ {I\mbox{-}st\mbox{-}\lim}~ x = \xi $. Then for $ \varepsilon > 0 ~~\mbox{and}~~ \delta >0 $, 
\begin{eqnarray*}
A = \{ (m,n)\in\mathbb{N\times N} : \frac{1}{mn} | \{ (j,k):j\leq m;k \leq n  , \|x_{jk} - \xi \| \geq r + \varepsilon \}| \geq \delta \} \in I.
\end{eqnarray*}
Let $ (m,n )\notin A $. Then
\begin{eqnarray*}
\frac{1}{mn} | \{ (j,k):j\leq m;k \leq n  , \|x_{jk} - \xi \| \geq r + \varepsilon \}| < \delta.
\end{eqnarray*}
\begin{eqnarray*}
\Rightarrow\frac{1}{mn} | \{ (j,k):j\leq m;k \leq n  , \|x_{jk} - \xi \| < r + \varepsilon \}| \geq 1 - \delta .
\end{eqnarray*}
Let $ B_{mn}= \{ (j,k):j\leq m;k \leq n  , \|x_{jk} - \xi \| < r + \varepsilon \}$. Now we define a double sequence $y=\{y_{jk}\}_{j,k\in\mathbb N}$ as follows,
\[ y_{jk} = \left\{ 
  \begin{array}{l l}
    \xi, & \quad \text{if $\|x_{jk} - \xi \| \leq r $ }\\
    x_{jk} + r \frac{\xi - x_{jk}}{\|x_{jk} - \xi \|}, & \quad \text{otherwise.}
  \end{array} \right.\]\\
  Then 
	\begin{eqnarray*}
 \|y_{jk} -\xi \| & = &  \left\{
  \begin{array}{l l}
    0, & \quad \text{if $\|x_{jk} - \xi \| \leq r $ }\\
    \|x_{jk} - \xi  + r \frac{\xi - x_{jk}}{\|x_{jk} - \xi \|}\|, & \quad \text{otherwise.}
  \end{array} \right.\\
  \\
& = &  \left\{
  \begin{array}{l l}
    0, & \quad \text{if $\|x_{jk} - \xi \| \leq r $ }\\
    \| x_{jk} - \xi \| - r , & \quad \text{otherwise.}
  \end{array} \right.\\
\end{eqnarray*}
Let $ (j,k) \in B_{mn} $. 
Then 
\begin{eqnarray*}
\|y_{jk} - \xi \| = 0 , ~~\mbox{if}~~\|x_{jk} -\xi \| \leq r 
\end{eqnarray*}
and
\begin{eqnarray*}
\|y_{jk} - \xi \| < \varepsilon,   ~~\mbox{if}~~ r < \|x_{jk} - \xi \| < r + \varepsilon.
\end{eqnarray*}
Then $ B_{mn} \subset \{ (j,k):j\leq m;k \leq n  , \|y_{jk} - \xi \| < \varepsilon \} $. This implies 
\begin{center}
$ \frac{|B_{mn}|}{mn} \leq \frac{1}{mn} |\{ (j,k):j\leq m;k \leq n , \|y_{jk} - \xi\| < \varepsilon\} |$.
\end{center}
Hence, 
\begin{center}
$ \frac{1}{mn}|\{(j,k):j\leq m;k \leq n , \|y_{jk} - \xi \| < \varepsilon \}| \geq 1 - \delta $
\end{center}
\begin{center}
$ \Rightarrow \frac{1}{mn}|\{(j,k):j\leq m;k \leq n , \|y_{jk} - \xi \| \geq \varepsilon \}| < 1 -(1 - \delta) = \delta $. 
\end{center}
Thus $\{(m,n)\in\mathbb{N\times N}: \frac{1}{mn}|\{(j,k):j\leq m;k \leq n  , \|y_{jk} - \xi \| \geq \varepsilon\} | \geq \delta \} \subset A $.
Since $ A \in I $, we have 
\begin{center}
$ \{ (m,n)\in\mathbb{N\times N}: \frac{1}{mn}|\{ (j,k):j\leq m;k \leq n , \|y_{jk} - \xi \| \geq \varepsilon \}| \geq \delta \} \in I $.
\end{center}
So $ {I\mbox{-}st\mbox{-}\lim}~ y = \xi $.
\end{proof}

\begin{defn}
A point $\lambda \in X $ is said to be an $I$-statistical cluster point of a double sequence $ x=\left\{x_{jk}\right\}_{j,k\in\mathbb N}$ in $X$ if for any $\varepsilon > 0 $
\begin{eqnarray*}
d_I(\{(j,k):\|x_{jk} - \lambda \| < \varepsilon \}) \neq 0
\end{eqnarray*}
where 
$ d_I(A) = I - \lim\limits_{\stackrel{\stackrel{m\rightarrow
\infty}{n\rightarrow \infty}} ~} \frac{1}{mn}|\{ (j,k):j\leq m;k \leq n, (j,k) \in A \}$ if exists.
\end{defn}
The set of $I$-statistical cluster point of $x$ is denoted by ${\Lambda}_x^S(I) $.
\begin{lem}\cite{Ma4}
 Let $ x=\left\{x_{jk}\right\}_{j,k\in\mathbb N}\in\mathbb {R}^n$ be $I$-statistically bounded double sequence. Then for every $\varepsilon>0$ the set 
 \begin{center}
$\left\{(j,k):d({\Lambda}_x^S(I),x_{jk})\geq\varepsilon\right\}$ 
 \end{center}
has $I$-asymptotic density zero, where $d({\Lambda}_x^S(I),x_{jk})=inf_{y\in{\Lambda}_x^S(I)}\left\|y-x_{jk}\right\|$, the distance from $x_{jk}$ to the set ${\Lambda}_x^S(I)$.
\end{lem}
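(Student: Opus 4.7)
My plan is to argue by contradiction, combining the hypothesis of $I$-statistical boundedness with a finite-covering/pigeonhole argument in $\mathbb{R}^n$. Suppose, to the contrary, that for some $\varepsilon>0$ the set $K=\{(j,k):d({\Lambda}_x^S(I),x_{jk})\geq\varepsilon\}$ fails to have $I$-asymptotic density zero, so there is $\delta_0>0$ with $\{(m,n):|K(m,n)|/(mn)\geq\delta_0\}\notin I$. Using $I$-statistical boundedness, choose $T>0$ so that the set $\{(j,k):\|x_{jk}\|>T\}$ has $I$-statistical density zero. Then the restriction $K':=K\cap\{(j,k):\|x_{jk}\|\leq T\}$ still fails to have density zero (it differs from $K$ by a density-zero set), and all $x_{jk}$ with $(j,k)\in K'$ lie in the compact set $C:=\overline{B_T(0)}\cap\{y\in\mathbb{R}^n:d({\Lambda}_x^S(I),y)\geq\varepsilon\}$.

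Next I would cover $C$ by finitely many open balls $B(c_1,\varepsilon/4),\ldots,B(c_{N_1},\varepsilon/4)$ and set $L_i^{(1)}=\{(j,k)\in K':x_{jk}\in B(c_i,\varepsilon/4)\}$. Since a finite union of sets of $I$-statistical density zero has density zero (one splits the threshold $\delta$ into $\delta/N_1$), at least one $L^{(1)}:=L_{i_1}^{(1)}$ has positive $I$-statistical density. Iterate: cover $\overline{B(c_{i_1},\varepsilon/4)}$ by finitely many balls of radius $\varepsilon/8$, pigeonhole again to obtain a center $c^{(2)}$ and $L^{(2)}\subset L^{(1)}$ of positive density, and continue in this way. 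The selected centers $c^{(\ell)}$ form a Cauchy sequence in $\mathbb{R}^n$ (consecutive centers differ by $O(\varepsilon/2^{\ell})$), hence converge to some $\lambda^*$.

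Finally, since each $L^{(\ell)}$ is nonempty, picking $(p_\ell,q_\ell)\in L^{(\ell)}$ yields $x_{p_\ell q_\ell}\to\lambda^*$; as $C$ is closed and each $x_{p_\ell q_\ell}\in C$, we get $\lambda^*\in C$. Moreover, $\lambda^*$ is an $I$-statistical cluster point of $x$: for arbitrary $\eta>0$, choose $\ell$ so large that every point of $B(c^{(\ell)},\varepsilon/2^{\ell+1})$ lies within $\eta$ of $\lambda^*$; then $L^{(\ell)}\subset\{(j,k):\|x_{jk}-\lambda^*\|<\eta\}$, and the latter inherits positive $I$-statistical density from $L^{(\ell)}$. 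Hence $\lambda^*\in{\Lambda}_x^S(I)$, contradicting $\lambda^*\in C$ which forces $d(\lambda^*,{\Lambda}_x^S(I))\geq\varepsilon$. The main technical obstacle is to carry positive $I$-statistical density cleanly through the iteration: one must verify that at every stage the finite-union property for density-zero sets applies (via the $\delta/N_\ell$ splitting) and that the surviving threshold remains strictly positive, ensuring each $L^{(\ell)}$ is nonempty so that the construction of $\lambda^*$ and the final cluster-point argument go through.
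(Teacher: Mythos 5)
The paper states this lemma only as a citation to reference [Ma4] and gives no proof of it, so there is no in-paper argument to compare yours against; judged on its own, your proof is correct. All the essential ingredients are present and used properly: the reduction, via $I$-statistical boundedness, to the compact set $C=\overline{B_T(0)}\cap\{y\in\mathbb{R}^n:d({\Lambda}_x^S(I),y)\geq\varepsilon\}$ (closed because $d({\Lambda}_x^S(I),\cdot)$ is $1$-Lipschitz); the fact that a finite union of sets of $I$-asymptotic density zero again has density zero (the $\delta/N$ splitting), which drives the pigeonhole at each stage; and the nested-ball construction producing a point $\lambda^*\in C$ that is simultaneously an $I$-statistical cluster point, which is the desired contradiction. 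Two small remarks. First, you should consistently say that $L^{(\ell)}$ \emph{fails to have $I$-asymptotic density zero} rather than that it \emph{has positive $I$-statistical density}, since the density of $L^{(\ell)}$ need not exist; fortunately the cluster-point definition in this paper only requires $d_I(\cdot)\neq 0$, and a superset of a set that fails to have density zero also fails to have density zero, so this weaker property is exactly what your pigeonhole step delivers and exactly what the final step needs. Second, the infinite iteration can be avoided: if no point of the compact set $C$ were an $I$-statistical cluster point, then every $y\in C$ would admit a radius $r_y>0$ with $\{(j,k):\|x_{jk}-y\|<r_y\}$ of $I$-asymptotic density zero, and extracting a finite subcover of $C$ would force $K'$ itself to have density zero, contradicting its choice. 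That one-step compactness argument reaches the same conclusion; your constructive version is longer but equally valid.
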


\begin{thm}
For any arbitrary $ c \in {\Lambda}_x^S(I) $ of a double sequence $ x=\left\{x_{jk}\right\}_{j,k\in\mathbb N}$ we have $ \| \xi - c \| \leq r $ for all $ \xi \in I\mbox{-}st\mbox{-}LIM_x^r $.
\end{thm}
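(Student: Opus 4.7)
The plan is to proceed by contradiction. Suppose there exist $\xi \in I\mbox{-}st\mbox{-}LIM_x^r$ and $c \in \Lambda_x^S(I)$ with $\|\xi-c\|>r$. Set $\varepsilon = \frac{\|\xi-c\|-r}{2} > 0$ and introduce the two sets $A = \{(j,k) \in \mathbb{N}\times\mathbb{N} : \|x_{jk}-\xi\| \geq r+\varepsilon\}$ and $B = \{(j,k) \in \mathbb{N}\times\mathbb{N} : \|x_{jk}-c\| < \varepsilon\}$.

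The first step is to show that $B \subset A$ via the reverse triangle inequality: for $(j,k) \in B$ we have $\|x_{jk}-\xi\| \geq \|\xi-c\| - \|x_{jk}-c\| > (r+2\varepsilon) - \varepsilon = r+\varepsilon$, so $(j,k) \in A$.

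The second step is to translate $\xi \in I\mbox{-}st\mbox{-}LIM_x^r$ into the density statement $d_I(A)=0$: by the defining condition of rough $I$-statistical convergence, for every $\delta > 0$ the set $\{(m,n) : \frac{|A(m,n)|}{mn} \geq \delta\}$ lies in $I$, which is precisely $I\mbox{-}\lim \frac{|A(m,n)|}{mn}=0$. Since $B\subset A$, for each $\delta>0$ the set $\{(m,n): \frac{|B(m,n)|}{mn} \geq \delta\}$ is a subset of $\{(m,n): \frac{|A(m,n)|}{mn} \geq \delta\}\in I$ and hence also belongs to $I$. Therefore $d_I(B)=0$. This contradicts $c \in \Lambda_x^S(I)$, which requires $d_I(B) = d_I(\{(j,k):\|x_{jk}-c\|<\varepsilon\}) \neq 0$; hence $\|\xi-c\|\leq r$.

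I do not anticipate a serious obstacle here. The only mildly delicate point is arguing that $d_I(B)$ is genuinely well defined and equals $0$ (rather than merely bounded above by a vanishing quantity), but this follows immediately from the hereditary property of the ideal $I$ applied to the inclusion $B \subset A$ together with the non-negativity of $\frac{|B(m,n)|}{mn}$.
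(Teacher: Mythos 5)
Your proposal is correct and follows essentially the same route as the paper's own proof: argue by contradiction, use the reverse triangle inequality to show that the $\varepsilon$-neighbourhood set of $c$ is contained in the exceptional set of $\xi$, and derive a contradiction between $d_I = 0$ and $d_I \neq 0$ (the paper merely uses $\varepsilon = \frac{\|\xi-c\|-r}{3}$ instead of your $\frac{\|\xi-c\|-r}{2}$, which is immaterial). Your explicit justification that $d_I(B)=0$ via the ideal's hereditary property is a welcome spelling-out of a step the paper leaves implicit.
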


\begin{proof}
Assume that there exists a point $ c \in {\Lambda}_x^S(I) $ and $ \xi \in I\mbox{-}st\mbox{-}LIM^r_x $ such that 
$\| \xi - c \| > r $. Let $ \varepsilon = \frac{\| \xi - c \| - r}{3} $. Then 
\begin{eqnarray}
\{ (j,k) \in \mathbb{N\times N}: \|x_{jk} - \xi \| \geq r + \varepsilon \} \supset \{ (j,k) \in \mathbb{N\times N}: \|x_{jk} - c \| < \varepsilon \}. 
\end{eqnarray}
Since $ c \in {\Lambda}_x^S(I) $ we have $ d_I(\{(j,k):\|x_{jk} - c \| < \varepsilon \}) \neq 0 $.
Hence by (3) we have $ d_I(\{(j,k):\|x_{jk} - c \| \geq r + \varepsilon \}) \neq 0 $, which contradicts that 
$ \xi \in I\mbox{-}st\mbox{-}LIM_x^r $. Hence $ \|\xi - c \| \leq r $.
\end{proof}


\begin{thm}
Let $(\mathbb{R}^{n}, \parallel. \parallel)$ be a strictly convex space and $ x = \{x_{jk}\}_{j,k \in \mathbb{N}} $ be a double sequence in this space.
For any $ r > 0 $, let $ y_1, y_2 \in I\mbox{-}st\mbox{-}LIM^r_x $ with $\parallel y_1 - y_2 \parallel = 2r $. Then $x$ is $I$-statistically convergent to $\frac{1}{2}(y_1 + y_2)$.
\end{thm}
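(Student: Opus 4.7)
The plan is to exploit strict convexity of the norm to show that any point lying close to both closed balls $\overline{B_r(y_1)}$ and $\overline{B_r(y_2)}$ must in fact be close to their unique common point, the midpoint $\xi = \frac{1}{2}(y_1 + y_2)$, and then to convert this geometric information into an $I$-statistical statement using that $y_1, y_2 \in I\mbox{-}st\mbox{-}LIM_x^r$.

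First I would record the underlying geometric fact: in a strictly convex space, if $\|z - y_1\| \leq r$, $\|z - y_2\| \leq r$ and $\|y_1 - y_2\| = 2r$, then $z = \xi$. Indeed the chain $2r = \|y_1 - y_2\| \leq \|y_1 - z\| + \|z - y_2\| \leq 2r$ forces equality throughout, so $\|y_1 - z\| = \|z - y_2\| = r$; strict convexity then makes $y_1 - z$ and $z - y_2$ positive multiples of one another, and equality of norms pins them down to be equal, giving $z = \xi$. Next I would upgrade this to a quantitative form using compactness of closed bounded sets in $\mathbb{R}^n$: for every $\varepsilon > 0$ there exists $\delta = \delta(\varepsilon) > 0$ such that every $z$ with $\|z - y_1\| < r + \delta$ and $\|z - y_2\| < r + \delta$ satisfies $\|z - \xi\| < \varepsilon$. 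If no such $\delta$ existed, a sequence of witnesses $z_n$ (taking $\delta_n = 1/n$) would be bounded and hence would admit a convergent subsequence with limit $z^* \in \overline{B_r(y_1)} \cap \overline{B_r(y_2)}$ satisfying $\|z^* - \xi\| \geq \varepsilon$, contradicting the qualitative fact.

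With this lemma in hand, fix arbitrary $\varepsilon > 0$ and $\delta_0 > 0$ and let $\delta = \delta(\varepsilon)$. Set
\begin{center}
$A_i = \{(j,k) \in \mathbb{N} \times \mathbb{N} : \|x_{jk} - y_i\| \geq r + \delta\}$, $\quad i = 1, 2$.
\end{center}
Since $y_i \in I\mbox{-}st\mbox{-}LIM_x^r$, each $M_i = \{(m,n) : \frac{1}{mn}|A_i(m,n)| \geq \delta_0/2\}$ lies in $I$, hence $M_1 \cup M_2 \in I$. For $(m,n) \notin M_1 \cup M_2$ subadditivity gives $\frac{1}{mn}|(A_1 \cup A_2)(m,n)| < \delta_0$, while for every $(j,k)$ with $j \leq m$, $k \leq n$, $(j,k) \notin A_1 \cup A_2$, the quantitative lemma forces $\|x_{jk} - \xi\| < \varepsilon$. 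Therefore $\{(j,k) : j \leq m, k \leq n, \|x_{jk} - \xi\| \geq \varepsilon\} \subset (A_1 \cup A_2)(m,n)$, so $\{(m,n) : \frac{1}{mn}|\{(j,k) : j \leq m, k \leq n, \|x_{jk} - \xi\| \geq \varepsilon\}| \geq \delta_0\} \subset M_1 \cup M_2 \in I$, which is precisely the assertion that $x$ is $I$-statistically convergent to $\xi = \frac{1}{2}(y_1 + y_2)$.

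The main obstacle is obtaining the quantitative geometric lemma from strict convexity alone, since strict convexity is inherently a qualitative uniqueness statement. The uniform modulus $\delta(\varepsilon)$ must be extracted by a compactness argument, and this is where the finite dimensionality of the ambient space $\mathbb{R}^n$ (explicit in the hypothesis) is genuinely used. Once that lemma is secured, the ideal-theoretic part reduces to a routine two-set pigeonhole in the same spirit as Theorem 3.1 and Theorem 3.5.
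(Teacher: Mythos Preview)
Your proof is correct but follows a genuinely different route from the paper. The paper argues via the $I$-statistical cluster-point machinery: it shows that any $I$-statistical cluster point $y_3$ of $x$ must satisfy $\|y_3-y_i\|\le r$ for $i=1,2$ (Theorem~3.9), then uses strict convexity to force $y_3=\tfrac12(y_1+y_2)$; finally, since $I\mbox{-}st\mbox{-}LIM_x^r\neq\emptyset$ implies $I$-statistical boundedness (Theorem~3.3), Lemma~3.8 converts ``unique cluster point'' into $I$-statistical convergence. Your argument sidesteps cluster points entirely: you upgrade the qualitative uniqueness $\overline{B_r(y_1)}\cap\overline{B_r(y_2)}=\{\xi\}$ to a uniform modulus $\delta(\varepsilon)$ via Bolzano--Weierstrass, and then run a direct two-set pigeonhole on the $A_i$. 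Your route is more self-contained (no need for Theorems~3.3, 3.9 or Lemma~3.8) and isolates precisely where finite dimension enters (the compactness step), while the paper's route has the virtue of tying the result into the structural theory of $I$-statistical cluster points developed elsewhere in the article; in particular, Lemma~3.8 itself is a nontrivial input whose proof in $\mathbb{R}^n$ also rests on compactness, so the dependence on finite dimension is present in both arguments, just hidden at a different layer.
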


\begin{proof}
Let $y_3$ be an arbitrary $I$-statistical cluster point of $x$. Now since $y_1, y_2 \in I\mbox{-}st\mbox{-}LIM^r_x $, so by Theorem 3.9,
\begin{center}
$\parallel y_1 - y_3 \parallel \leq r ~ \mbox {and} ~ \parallel y_2 - y_3 \parallel \leq r $.
\end{center}
Then $ 2r = \parallel y_1 - y_2 \parallel \leq \parallel y_1 - y_3 \parallel + \parallel y_3 - y_2 \parallel \leq 2r $. Therefore $\parallel y_1 - y_3 \parallel = \parallel y_2 - y_3 \parallel = r $. Now
\begin{equation}
\frac{1}{2}(y_1 -y_2) = \frac{1}{2}[(y_3 - y_1) + (y_2 - y_3)]
\end{equation}
Since $\parallel y_1 - y_2 \parallel = 2r $, so $\parallel \frac{1}{2}(y_2 - y_1) \parallel = r $. Again since the space is strictly convex so by (4) we get, $\frac{1}{2}(y_2 - y_1) = y_3 -y_1 = y_2 - y_3 $. Thus $ y_3 = \frac{1}{2}(y_1 + y_2)$ is the unique $I$-statistical cluster point of the double sequence $x$. Again by the given condition $I\mbox{-}st\mbox{-}LIM_x^r  \neq \emptyset $ and so by Theorem 3.3, $x$ is $I$-statistically bounded. Since $y_3$ is the unique $I$-statistical cluster point of the $I$-statistically bounded double sequence $x$, so by Lemma 3.8, $x$ is $I$-statistically convergent to $ y_3 = \frac{1}{2}(y_1 + y_2)$.
\end{proof}


\begin{thm}
Let $ x =\{x_{jk} \}_{j,k \in \mathbb{N}}$ be a double sequence in $X$. 

(i) If $ c \in {{\Lambda}_x^S}(I)$, then $ I\mbox{-}st\mbox{-}LIM_x^r \subset \overline{{B_r}(c)} $.

(ii) $ I\mbox{-}st\mbox{-}LIM_x^r = \underset{ c \in
{\Lambda}_x^S(I) }{\bigcap}{\overline{{B_r}(c)}} = \{\xi \in X:
{\Lambda}_x^S(I)  \subset \overline{B_r(\xi)}\}$.

\end{thm}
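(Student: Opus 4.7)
The plan is to split (ii) into three sub-claims and dispose of the two easy ones first, leaving the non-trivial direction to be handled by contradiction via the previous theorem. Write
\begin{align*}
A &= I\mbox{-}st\mbox{-}LIM_x^r, \\
B &= \underset{c\in\Lambda_x^S(I)}{\bigcap}\overline{B_r(c)}, \\
C &= \{\xi\in X: \Lambda_x^S(I)\subset\overline{B_r(\xi)}\}.
\end{align*}
I will show (i) $A\subset \overline{B_r(c)}$ for each $c\in\Lambda_x^S(I)$, (ii-a) $B=C$, (ii-b) $A\subset B$, and (ii-c) $B\subset A$. The first three are direct; the last is the substantive step.

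For (i), pick any $c\in\Lambda_x^S(I)$ and any $\xi\in A$. Theorem 3.9 applies verbatim and yields $\|\xi-c\|\leq r$, i.e.\ $\xi\in\overline{B_r(c)}$. Intersecting over $c$ then gives (ii-b) $A\subset B$ at no extra cost.

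For (ii-a), the symmetry $\|\xi-c\|\leq r \Longleftrightarrow c\in\overline{B_r(\xi)}$ makes the equivalence essentially a tautology: $\xi\in B$ iff $\|\xi-c\|\leq r$ for every $c\in\Lambda_x^S(I)$, iff every such $c$ lies in $\overline{B_r(\xi)}$, iff $\xi\in C$.

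For (ii-c), I would argue by contrapositive. Suppose $\xi\notin A$. Then there exist $\varepsilon,\delta>0$ with
\begin{equation*}
M=\Bigl\{(m,n):\tfrac{1}{mn}\bigl|\{(j,k):j\le m,k\le n,\|x_{jk}-\xi\|\ge r+\varepsilon\}\bigr|\ge\delta\Bigr\}\notin I.
\end{equation*}
From this one must produce a statistical cluster point $c\in\Lambda_x^S(I)$ with $\|c-\xi\|>r$, thereby violating $\Lambda_x^S(I)\subset\overline{B_r(\xi)}$ and hence certifying $\xi\notin C$. Since $A$ is non-empty whenever $C$ is (trivially, $C\supset A$ via the other direction), Theorem 3.3 gives $I$-statistical boundedness of $x$, so the subfamily $\{x_{jk}:\|x_{jk}-\xi\|\ge r+\varepsilon\}$ lies in a bounded region; I would then extract a limit point $c$ of this subfamily in the closed annulus $\{y:\|y-\xi\|\ge r+\varepsilon\}$ and verify that $c\in\Lambda_x^S(I)$ by unwinding the density hypothesis on $M$.

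The main obstacle is precisely this last extraction: upgrading "many $x_{jk}$ are far from $\xi$" into the existence of an $I$-statistical cluster point sitting outside $\overline{B_r(\xi)}$. The density book-keeping (showing that every $\eta$-ball around the candidate $c$ captures a set of indices with nonzero $I$-asymptotic density) is the delicate part, and it tacitly uses compactness of closed bounded sets, so the argument is cleanest when $X=\mathbb{R}^n$ (the setting of Lemma 3.8); in a general normed space one would need to supplement $I$-statistical boundedness with a compactness assumption for the proof to go through unchanged.
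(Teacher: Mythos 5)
Your decomposition coincides with the paper's: part (i) and the inclusion $I\mbox{-}st\mbox{-}LIM_x^r\subset\bigcap_{c\in\Lambda_x^S(I)}\overline{B_r(c)}$ both fall out of Theorem 3.9; the identification of $\bigcap_{c\in\Lambda_x^S(I)}\overline{B_r(c)}$ with $\{\xi\in X:\Lambda_x^S(I)\subset\overline{B_r(\xi)}\}$ is the tautology you describe; and the remaining inclusion is handled contrapositively by producing, from $\xi\notin I\mbox{-}st\mbox{-}LIM_x^r$, a cluster point $c\in\Lambda_x^S(I)$ with $\|c-\xi\|\geq r+\varepsilon$. So structurally you and the paper agree.

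Two remarks on your step (ii-c), which is where all the content sits. First, your justification of $I$-statistical boundedness is backwards: the already-established inclusion $A\subset C$ gives ``$A$ nonempty $\Rightarrow$ $C$ nonempty'', not the converse, so from the standing assumption $\xi\notin A$ (or even $\xi\in C$) you cannot invoke Theorem 3.3 to get boundedness. This is not a cosmetic slip: if $\Lambda_x^S(I)=\emptyset$ (take $x_{jk}=j+k$ in $\mathbb{R}$, where every $\varepsilon$-ball captures an index set of density zero), then $C=X$ vacuously while $I\mbox{-}st\mbox{-}LIM_x^r=\emptyset$, so the inclusion $C\subset A$ actually fails without an added boundedness hypothesis, and no amount of bookkeeping will conjure the required cluster point. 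Second, the extraction itself --- upgrading a non-null set of indices in the annulus $\{y:\|y-\xi\|\geq r+\varepsilon\}$ to an $I$-statistical cluster point there --- is exactly the step the paper dispatches in a single unproved sentence (``This implies the existence of an $I$-statistical cluster point $c$ with $\|y-c\|\geq r+\varepsilon$''), and you are right that it needs compactness of closed bounded sets, hence $X=\mathbb{R}^n$ as in Lemma 3.8. So your proposal is no less complete than the printed proof; it is merely more candid about where the difficulty lies. The clean repair, for both arguments, is to assume $X=\mathbb{R}^n$ and $x$ $I$-statistically bounded, and then to apply Lemma 3.8: the index set $\{(j,k):\|x_{jk}-\xi\|\geq r+\varepsilon\}$ of non-null density cannot be contained, modulo a null set, in $\{(j,k):d(\Lambda_x^S(I),x_{jk})\geq\varepsilon/2\}$, which forces a point of $\Lambda_x^S(I)$ at distance greater than $r$ from $\xi$ without any ad hoc subsequence extraction.
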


\begin{proof}
(i)Let $ c \in {\Lambda}_x^S(I) $. Then by Theorem 3.9, for all $
\xi \in I\mbox{-}st\mbox{-}LIM_x^r $, $ \|\xi - c \| \leq r $ and
hence the result follows.

(ii) By (i) it is clear that $ I\mbox{-}st\mbox{-}LIM_r^x \subset
\underset{ c \in {\Lambda}_x^S(I) }{\bigcap}{\overline{{B_r}(c)}}
$. Now for all $ c \in {\Lambda}_x^S(I) $ and $ y \in \underset{ c
\in {\Lambda}_x^S(I) }{\bigcap}{\overline{{B_r}(c)}}$ we have $\|y
- c \| \leq r $. Then clearly $ \underset{ c \in {\Lambda}_x^S(I)
} {\bigcap}{\overline{{B_r}(c)}} \subset \{ \xi \in X:
{\Lambda}_x^S(I) \subset \overline{{B_r}(\xi)}\} $.

Now, let $ y \notin I\mbox{-}st\mbox{-}LIM_x^r $. Then there
exists an $ \varepsilon > 0 $ such that $ {d_I}(A) \neq 0 $, where
$ A = \{ (m,n) \in \mathbb{N\times N}: \|x_{mn} - y \| \geq r + \varepsilon \} $
. This implies the existence of an $I$-statistical cluster point
$c$ of the sequence $ x $ with $ \|y - c \| \geq r + \varepsilon
$. This gives ${\Lambda}_x^S(I) \nsubseteq \overline{{B_r}(y)}$and
so $ y \notin \{ \xi \in X: {\Lambda}_x^S(I) \subset
\overline{{B_r}(\xi)} \} $. Hence $ \{ \xi \in X: {\Lambda}_x^S(I)
\subset \overline{{B_r}(\xi)} \} \subset I
\mbox{-}st\mbox{-}LIM_x^r $. This completes the proof.

\end{proof}


\noindent\textbf{Acknowledgement:}
The second author is grateful to Government of India for his fellowship funding under UGC-JRF scheme during the preparation of this paper.
\\


\end{document}